\newtheorem{theorem}{Theorem}[section]
\newtheorem{corollary}[theorem]{Corollary}
\newtheorem{proposition}[theorem]{Proposition}
\theoremstyle{definition}
\newtheorem{notation}[theorem]{Notation}
\theoremstyle{remark}
\newtheorem{remark}[theorem]{Remark}
\newcommand{\R}{\mathbb{R}}
\newcommand{\bd}{\partial}
\newcommand{\rn}{\mathbb{R}^n}
\newcommand{\rno}{\mathbb{R}^{n+1}}
\newcommand{\Ak}{|A|^2_k}
\newcommand{\cL}{\mathcal{L}}
\newcommand{\cv}{\text{Conv}}
\newcommand{\cM}{\mathcal{M}}
\newcommand{\cK}{\mathcal{K}}
\newcommand{\cA}{\mathcal{A}}
\newcommand{\Sj}{\Gamma}
\newcommand{\grad}{\upsilon}
\newcommand{\tN}{\widetilde{\nabla}}
\newcommand{\subscript}[2]{$#1 _ #2$}
\title{The   $Q_k$ flow on complete non-compact graphs}
\author{Kyeongsu Choi}
\address{ {\bf Kyeongsu Choi:} Department of Mathematics, Columbia University, 2990 Broadway, New York, NY 10027, USA.}
\email{kschoi@math.columbia.edu}
\author{Panagiota Daskalopoulos}
\address{ {\bf P. Daskalopoulos:} Department of Mathematics, Columbia University, 2990 Broadway, New York, NY 10027, USA.}
\email{pdaskalo@math.columbia.edu}
\begin{document}

\maketitle

\section{Introduction}

We study in this work the long time existence of a family of complete non-compact strictly convex hypersurfaces $\Sigma_t$
 embedded in $\rno$ which  evolve  by  the $Q_k$-flow. 
Given a complete and convex hypersurface $\Sigma_0$ embedded in $\rno$, we assume that $F_0:M^n \rightarrow \rno$ is an immersion  with $F_0(M^n)=\Sigma_0$. We say that the  one-parameter family of immersions $$F:M^n\times [0,T) \rightarrow \rno$$ is a solution of the $Q_k$-flow $(1\leq k \leq n)$,   if $F(M^n,t)=\Sigma_t$ are complete convex hypersurfaces for all $t\in [0,T)$  and $F(\cdot,t)$ satisfies 
\begin{equation}\label{eq:INT Qkflow}
\begin{cases} 
{\displaystyle  \frac{\bd}{\bd t}} F(p,t) & = \; Q_k (p,t) \vec{n}  (p,t) \\
 F(p , 0 ) & = \; F_0(p). 
\end{cases} \tag{$*_k^n$}
\end{equation}
where   $\vec{n}(p,t)$ is the unit normal vector pointing inside the convex hull of $\Sigma_t$.  The speed 
$$\displaystyle Q_k (p,t) := \frac{S_k(p,t)}{S_{k-1}(p,t)}$$  is the quotient of the  elementary successive polynomials  of the principal curvatures $\{\lambda_1(p,t) , \cdots , \lambda_n(p,t)\} $ of $\Sigma_t$ at $F(p,t)$, given by 
\begin{equation*}
S_0 (p,t)   = 1, \qquad  S_k (p,t)  = \sum_{ 1\leq i_1 < \cdots < i_k \leq n } \lambda_{i_1}(p,t)\cdots\lambda_{i_k}(p,t) \qquad \text{for} \; 1 \leq k \leq n.
\end{equation*}

In \cite{A94NonlinearFlow}, B. Andrews showed  the existence of strictly convex closed solutions of a class of nonlinear flow which includes the $Q_k$-flow. S. Diater extended the results to closed convex solutions with the positive $S_{k-1}$ curvature in \cite{Dieter05Qk}. Moreover, Caputo, Daskalopoulos, and Sesum showed the existence of compact convex $C^{1,1}$ viscosity solutions with flat sides in \cite{CD09HMCF} and in \cite{CDN10Qk}. Closed non-convex solutions of the $Q_2$-flow in $\mathbb{R}^3$, the  \textit{Harmonic mean curvature flow},  were considered by Daskalopoulos and  Hamilton  in \cite{DH06HMCF} and  by Daskalopoulos, Hamilton and Sesum in \cite{DN10HMCF}.

\smallskip

The equation  \eqref{eq:INT Qkflow} is  fully-nonlinear except from the case of $k=1$ which is the flow by Mean curvature.  
The evolution of {\em entire graphs}  by the Mean curvature flow was studied by Ecker and G. Huisken in  \cite{HE89MCFasymptotic, HE91MCFexist}.
More recently,  S{\'a}ez  and Schn{\"u}rer \cite{OS14CpltMean}  showed   the existence of  complete solutions of the \textit{Mean curvature flow} 
for an  initial hypersurface which is a graph $\Sigma_0=\{(x,u_0(x)):x\in \Omega_0\}$ over a bounded domain  $\Omega_0$,  and $u_0(x) \to +\infty$ as $x \to \bd \Omega_0$.

The Ecker and Huisken result in  \cite{HE91MCFexist}  shows   that in some sense the Mean curvature flow  
behaves better than the  heat equation on $\rn$, namely an entire graph  
solution exists for  all time independently from the growth of the initial surface at infinity. 
The initial entire graph is assumed to be only locally Lipschitz.  
This result is based on a   local gradient estimate which is then  combined with the evolution 
of the norm of the second fundamental form $|A|^2$  to give a  local bound  on $|A|^2$,  which is independent from the behavior of the
solution at  spatial infinity. 
The latter is achieved  by adopting the  well known technique 
of   Caffarelli, Nirenberg and Spruck  in \cite{CNS88trick} in this geometric setting.

\smallskip 
An  open question between the experts in the field is whether the techniques of Ecker and Huisken in
\cite{HE89MCFasymptotic, HE91MCFexist} can be extended to  the fully-nonlinear setting and in particular on complete  convex graphs 
evolving  by the {\em  $Q_k$-flow }. In this work we will show that this
the case under the weak  convexity assumption, as our main result stated next shows.

\begin{theorem} \label{thm:INT Existence}
Let   $\Sigma_0=\{ (x,  u_0(x)): x \in \Omega\}$ be a smooth weakly  convex graph defined by a  function $ u_0:\Omega \rightarrow \R$ 
on a convex open domain $\Omega \subset \rn$ such that  
\begin{enumerate}
\item[{\em (i)}]  $u_0 $ attains its minimum in $\Omega_0$, and $\inf_{\Omega_0}u_0 \geq 0$.
\item[{\em (ii)}] If $ \Omega_0 \neq \rn$,  for all $x_0 \in \bd \Omega_0$, $\displaystyle \lim_{x \rightarrow x_0} u_0(x) = +\infty$ holds.
\item[{\em (iii)}]  If $\Omega_0$ is unbounded, then $\displaystyle \lim_{|x| \rightarrow +\infty} u_0(x) = +\infty$ holds.
\item[{\em (iv)}]  $\Sigma_0$ has the positive $Q_k$ curvature at all $(x,u_0(x)) \in \Sigma_0$.
\end{enumerate}
Then, given a smooth immersion $F_0$ of $F_0(M^n)=\Sigma_0$, there is a complete convex solution $\Sigma_t$ of \emph{(\ref{eq:INT Qkflow})} for $$T \geq \frac{k}{2(n-k+1)}R^2.$$ Moreover, for each $t \in [0,T)$,  $\Sigma_t$ is a graph.  In particular, 
if  for all $R >0$ there exists a ball    $B_R(x_R) \subset \Omega_0$\;, then \emph{(\ref{eq:INT Qkflow})} has an all-time existing $\Sigma_t$ solution.
\end{theorem}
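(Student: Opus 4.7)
The plan is to produce the complete solution $\Sigma_t$ as a limit of closed convex solutions. First, I would approximate $\Sigma_0$ by a sequence $\Sigma_0^{(j)}$ of smooth, closed, strictly convex hypersurfaces with positive $Q_k$, built for instance by truncating the graph of $u_0$ at height $j$ and smoothly capping off with a small convex lid; hypotheses (i)--(iv) guarantee that $\Sigma_0^{(j)} \to \Sigma_0$ locally smoothly. By the closed-case existence results of Andrews and Dieter cited above, each $\Sigma_0^{(j)}$ evolves by (\ref{eq:INT Qkflow}) on some maximal interval $[0,T_j)$. The problem then reduces to showing $\liminf T_j \geq T^* := \frac{k}{2(n-k+1)}R^2$ together with local uniform $C^\infty$ estimates, from which Arzel\`a--Ascoli and a diagonal subsequence produce a smooth complete convex limit $\Sigma_t$ on $[0,T^*)$.

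For the lower bound on the existence time I would use the round sphere as a barrier. A sphere of radius $r$ evolves under (\ref{eq:INT Qkflow}) with speed $Q_k = (n-k+1)/(kr)$, so its radius satisfies $\dot r = -(n-k+1)/(kr)$ and vanishes precisely at $T^*(r) = \frac{k}{2(n-k+1)}r^2$. Since $B_R(x_R) \subset \Omega_0$, for every large $j$ the convex region enclosed by $\Sigma_0^{(j)}$ contains a round ball of radius $R$, and the avoidance principle forces $\Sigma_t^{(j)}$ to continue enclosing a shrinking sphere of positive radius on $[0, T^*)$. This rules out interior collapse before $T^*$, and combined with the local curvature bounds below it forces $T_j > T^*$ for $j$ large. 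In the unbounded case where $\Omega_0$ contains balls of every radius, letting $R \to \infty$ gives all-time existence.

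The substantive work is a pair of local a priori estimates, uniform in $j$ on compact subsets of $\rno \times [0, T^* - \delta]$. The first is preservation of graphicality: the gradient function $v = \langle \vec n, e_{n+1}\rangle^{-1}$ should be controlled locally by running the maximum principle on $v$ times a cutoff depending on the height $u$, following the Ecker--Huisken template; the linearization $\dot Q_k^{ij}$ is uniformly elliptic on strictly convex hypersurfaces with $Q_k$ bounded below, which is what allows their argument to extend. The second is an upper bound on $|A|^2$: apply the maximum principle to a well-chosen symmetric function of the principal curvatures (for instance $\log \lambda_{\max}$ or $Q_k$ itself) multiplied by a height cutoff, exploiting the concavity of $Q_k$ on the positive cone in the spirit of Caffarelli--Nirenberg--Spruck. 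A lower bound on $Q_k$ coming from comparison with enclosing spheres ensures uniform ellipticity throughout. Once both estimates are in hand, standard parabolic regularity provides full $C^\infty$ convergence and the limit $\Sigma_t$ is a smooth convex graph on $[0,T^*)$.

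The main obstacle is the local $|A|^2$ bound in the fully nonlinear setting. For the mean curvature flow ($k=1$) Ecker and Huisken could work directly with the scalar evolution of $|A|^2$, whose reaction terms are controlled by $|A|^4$ with the correct sign. For $k \geq 2$ the evolution of $|A|^2$ is no longer clean, and one must localize a different curvature function tailored to the concavity structure of $Q_k$, then adapt the Caffarelli--Nirenberg--Spruck cutoff technique to the evolving graph setting while simultaneously controlling the height, the gradient, and the principal curvatures. Getting the correct sign on all of the error terms generated by differentiating the cutoff, the height, and the speed is the technical heart of the argument; once it is in place, the limit procedure is routine.
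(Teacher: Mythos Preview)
Your outline matches the paper's strategy---approximate by closed strictly convex hypersurfaces, use the shrinking-sphere barrier for the time lower bound, and pass to the limit via local a priori estimates---but there are two genuine gaps.

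First, the lower bound on $Q_k$ does \emph{not} come from comparison with enclosing spheres. A circumscribed sphere of radius $r$ touching $\Sigma_t$ gives $\lambda_i \geq 1/r$ only at the contact point, not globally, so it cannot furnish uniform ellipticity. The paper instead proves directly that $\psi Q_k^{-1}$ is a subsolution of the linearized equation (Theorem~\ref{thm:Pre Lower Bound of Q_k}): from $\bd_t Q_k = \cL Q_k + |A|^2_k Q_k$ and $\bd_t \psi = \cL \psi$ one gets $(\bd_t - \cL)(\psi Q_k^{-1}) \leq 0$ up to a gradient term, and the maximum principle yields $\inf (Q_k/\psi) \geq \inf_{t=0}(Q_k/\psi)$. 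This PDE argument is essential and has no barrier substitute. Relatedly, the paper separates the \emph{speed upper bound} (Theorem~\ref{thm:SE Speed Estimate}, via the Ecker--Huisken trick applied to $Q_k^2 \varphi(\upsilon^2)\psi^2$) from the \emph{curvature bound} (Theorem~\ref{thm:CE Local Pinching Estimate}, a Pogorelov computation on $\psi^2 h_{11}/g_{11}$ following Sheng--Urbas--Wang); the latter requires $\sup Q_k$ as input, so one cannot skip directly to $\lambda_{\max}$.

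Second, ``truncating at height $j$ and capping off with a convex lid'' does not obviously produce a smooth closed hypersurface with positive $Q_k$, especially since $\Sigma_0$ is only weakly convex. The paper's construction is more delicate: perturb $u_0$ by $j^{-1}\int_0^{|x|}\arctan r\,dr$ to force strict convexity, reflect across $\{x_{n+1}=j\}$ to close up (producing a crease), take the $(1/j)$-envelope to get a $C^{1,1}$ hypersurface, then run \emph{mean curvature flow} for time $1/j$ to smooth it while preserving strict convexity and a lower bound on $Q_k$ (Corollary~\ref{cor:Pre Lower Bound of Q_k}). The reflection also guarantees the lower half remains a graph, which is what allows the height cutoff $\psi$ to localize all the estimates.
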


\begin{figure}[h]
\subfigure[$\Omega=\rn$]{
\begin{tikzpicture}[line cap=round,line join=round,>=triangle 45,x=0.18cm,y=0.18cm]
\clip(-11.6278237129,-0.532800819239) rectangle (11.6020914256,16.4171110141);
\draw [samples=50,domain=-0.99:0.99,rotate around={90.:(0.,0.)},xshift=0.cm,yshift=0.cm] plot ({1.28919490362*(1+(\x)^2)/(1-(\x)^2)},{1.52904430952*2*(\x)/(1-(\x)^2)});
\draw [samples=50,domain=-0.99:0.99,rotate around={90.:(0.,0.)},xshift=0.cm,yshift=0.cm] plot ({1.28919490362*(-1-(\x)^2)/(1-(\x)^2)},{1.52904430952*(-2)*(\x)/(1-(\x)^2)});
\begin{scriptsize}
\draw[color=black] (-6,7) node[scale=1.4] {$\Sigma_0$};
\end{scriptsize}
\end{tikzpicture}
}
\subfigure[$\Omega = B_R(0)$]{
\begin{tikzpicture}[line cap=round,line join=round,>=triangle 45,x=0.5cm,y=0.5cm]
\clip(-4.67471793468,-1.) rectangle (4.74524982753,5.31432920204);
\draw [rotate around={90.:(0.,5.)}] (0.,5.) ellipse (2.91547594742cm and 1.4cm);
\draw [dash pattern=on 1pt off 1pt] (3.05,-1.) -- (3.05,5.31432920204);
\draw [dash pattern=on 1pt off 1pt] (-3.05,-1.) -- (-3.05,5.31432920204);
\begin{scriptsize}
\draw[color=black] (-2,3) node[scale=1.4] {$\Sigma_0$};
\end{scriptsize}
\end{tikzpicture}
}
\subfigure[$\Omega = \mathbb{R}^{n-1}\times \mathbb{R}^+$]{
\begin{tikzpicture}[line cap=round,line join=round,>=triangle 45,x=0.3cm,y=0.3cm]
\clip(-2.22055421895,1.36898033791) rectangle (11.0682860864,12);
\draw [samples=50,domain=-0.99:0.99,rotate around={56.309932474:(0.,0.)},xshift=0.cm,yshift=0.cm] plot ({2.9868607644*(1+(\x)^2)/(1-(\x)^2)},{2.01956994781*2*(\x)/(1-(\x)^2)});
\draw [samples=50,domain=-0.99:0.99,rotate around={56.309932474:(0.,0.)},xshift=0.cm,yshift=0.cm] plot ({2.9868607644*(-1-(\x)^2)/(1-(\x)^2)},{2.01956994781*(-2)*(\x)/(1-(\x)^2)});
\draw [dash pattern=on 4pt off 4pt] (0.,1.36898033791) -- (0.,12);
\begin{scriptsize}
\draw[color=black] (1.5,8) node[scale=1.4] {$\Sigma_0$};
\end{scriptsize}
\end{tikzpicture}
}
\caption{Examples of the initial hypersurface $\Sigma_0$}
\end{figure}
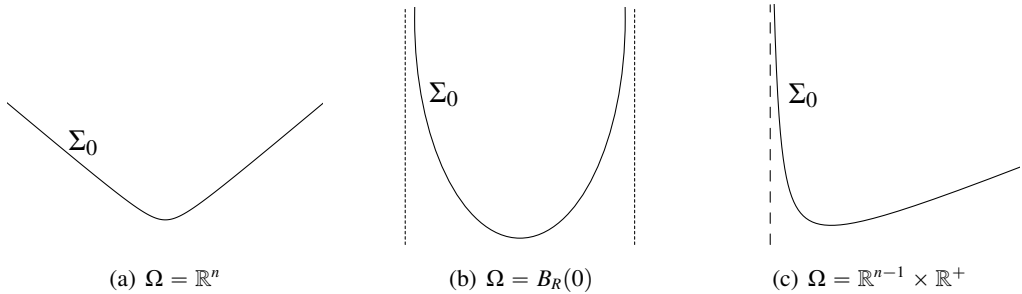

\begin{remark}[General initial data]
Given a complete and strictly convex hypersurface $\Sigma^n_0$  embedded in $\rno$, there exist an orthogonal matrix $A \in O(n+1)$, a vector $Y_0 \in \rno$ and a function $u_0: \Omega \rightarrow \mathbb{R}$ such that $A\Sigma_0+Y_0 \coloneqq \{ Y_0+AX : X \in \Sigma_0 \}=\{ (x,u_0(x)):x \in \Omega\}$, and  the conditions (i), (ii), (iii) above hold. Thus, Theorem \ref{thm:INT Existence} shows the existence of a complete convex solution $\Sigma_t$ of \eqref{eq:INT Qkflow} for any complete smooth strictly convex hypersurface $\Sigma_0$.
\end{remark}

\noindent{\em Discussion of the proof of Theorem \ref{thm:INT Existence}}:  The proof of Theorem \ref{thm:INT Existence} 
mainly relies on three  a'priori local estimates:  the local gradient bound shown in Theorem \ref{thm:Pre Gradient},
the local speed estimate given in Theorem \ref{thm:SE Speed Estimate}
and a local bound  from above on the 
second fundamental form $|A|^2$ given in Theorem \ref{thm:CE Local Pinching Estimate}. The gradient and the speed estimates  use the well known technique by  Caffarelli, Nirenberg and Spruck in \cite{CNS88trick}  also used by Esker and Huisken 
in the context of the Mean curvature flow in  \cite{HE91MCFexist}. Then, by using the concavity of the $Q_k(\lambda)$ function, we derive 
a local bound on  $|A|^2$  by modifying the elliptic estimate
by W. Sheng, J. Urbas  and X.-J. Wang in \cite{SUX04Qk} to the parabolic setting.
We also establish  derivative curvature estimates of any order,  by adopting the  technique of  Shi's local derivative estimate 
to the setting of  concave fully nonlinear equations. The long time existence is shown  by approximation with compact hypersurfaces
and applying the local a priori estimates.

\begin{notation}\label{not:INT notation}
We summarize the following notation, which will be frequently used in this paper.
\begin{enumerate}
\item We recall the \textit{second fundamental form} $h_{ij} \coloneq \langle \nabla_i \nabla_j F, \vec{n} \rangle$ and the \textit{metric} $g_{ij} \coloneq \langle F_i, F_j \rangle$, where $F_i \coloneq \nabla_i F$.
\item We denote by $\bar u:M^n\rightarrow\mathbb{R}$ the \textit{height function}
$\bar u(p,t)\coloneq \langle F(p,t),\vec{e}_{n+1} \rangle$. Also, given a constant $M \in \mathbb{R}$, we define a \textit{cut-off function} $\psi$ by $$\psi(p,t)\coloneq (M-\bar u(p,t))_+ =\max (M- \bar u , 0).$$
\item  $\grad \coloneq \langle \vec{n},\vec{e}_{n+1} \rangle^{-1}$ denote the \textit{gradient function} (as in \cite{HE91MCFexist}). 
\item We denote by $\cL$\, the \textit{linearized} operator, $$\cL \coloneq \frac{\bd Q_k}{\bd h_{ij}} \nabla_i \nabla_j.  $$ In addition, $\langle \, , \, \rangle_\cL$ denotes the \textit{inner product} $\displaystyle \langle \nabla f,\nabla g \rangle_\cL = \frac{\bd Q_k}{\bd h_{ij}} \nabla_i f \nabla_j g  $, where $f,g$ are differentiable functions on $M^n$, and $\|\cdot \|_{\cL}$ denotes the $\cL$-\textit{norm} given by the inner product $\langle \,,\,\rangle_{\cL}$.
\item We recall \textit{the square sum of the principal curvatures} $|A|^2=h_{ij}h^{ij}$ and \textit{the $m$-th order derivative of curvature} $$|\nabla^m A|^2 = \nabla_{i_1}\cdots\nabla_{i_m}h_{jk}\nabla^{i_1}\cdots\nabla^{i_m}h^{jk}.$$
\item For the \textit{principal curvatures} $\{\lambda_1,\cdots,\lambda_n \}$, we denote by $\lambda_{\max}$ the \textit{largest principal curvature} $\lambda_{\max} \coloneq \max \{\lambda_1 ,\cdots,\lambda_n\}$. Also, denote  the following  functions of the principal curvatures
\begin{align*}
S_{k;i}(\lambda) \coloneq \frac{\bd S_{k+1}(\lambda)}{\bd \lambda_i},  \quad  \Ak(\lambda) \coloneq \sum_{i=1}^n\frac{\bd Q_k(\lambda)}{\bd \lambda_i} \lambda_i^2, \quad D_iQ_k=\frac{\bd Q_k(\lambda)}{\bd \lambda_i},  \quad D_{ij}Q_k=\frac{\bd^2 Q_k(\lambda)}{\bd \lambda_t \bd\lambda_j}.
\end{align*}
\end{enumerate}
\end{notation}

\section{Preliminaries}\label{sec-prelim}
In this section, we will  review some  properties of the symmetric function $Q_k(\lambda)$ of $\lambda$, and we will derive some basic evolution equations under the $Q_k$-flow.  We will 
also establish a  local gradient estimate  and a  local lower bound on  the speed, as straightforward consequence of  the evolution equations.

\begin{proposition}\label{prop:Pre 3rd Order}
Assume that $\Sigma$ is a convex smooth hypersurface in $\rno$ with the positive $Q_k$ curvature and $F:M^n \rightarrow \rno$ is  a smooth immersion satisfying $F(M^n)=\Sigma$. Let us choose an orthonormal frame at some point $F(p)$ satisfying $g_{ij}(p)=\delta_{ij}$, $ h_{ij}(p)=\delta_{ij}\lambda_i(p)$. Then,  the following holds at $F(p)$  for each $m\in \{1,\cdots,n\}$
\begin{align*}
\sum_{i,j,p,q}\frac{\bd^2 Q_k}{\bd h_{ij} \bd h_{pq}}\nabla^m h_{ij} \nabla_m h_{pq}  
\leq 2 \sum_{i < j} \frac{D_i Q_k-D_j Q_k}{\lambda_i -\lambda_j}\nabla^m h_{ij} \nabla_m h_{ij}. 
\end{align*}
\end{proposition}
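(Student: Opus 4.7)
The plan is to combine two classical facts: the pointwise identity expressing the second derivatives of a symmetric function of a symmetric matrix in terms of its eigenvalues, and the concavity of $Q_k$ on the positive cone.

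First, I would invoke the standard decomposition, valid at a point where $(h_{ij})$ is diagonalized in the chosen orthonormal frame. For any smooth symmetric function $F$ of the principal curvatures and any symmetric tensor $B=(B_{ij})$,
\begin{equation*}
\sum_{i,j,p,q}\frac{\bd^2 F}{\bd h_{ij}\,\bd h_{pq}}\,B_{ij}\,B_{pq} \;=\; \sum_{i,j} D_{ij}F \;B_{ii}B_{jj} \;+\; 2\sum_{i<j}\frac{D_iF-D_jF}{\lambda_i-\lambda_j}\,B_{ij}^{\,2},
\end{equation*}
where the difference quotient is interpreted via L'Hôpital's rule when $\lambda_i=\lambda_j$. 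This is a purely algebraic identity that follows from differentiating the spectral decomposition and is standard in the literature on curvature flows (see for instance Andrews, \emph{Contraction of convex hypersurfaces in Euclidean space}, or Gerhardt's monograph on curvature problems). Applying it with $F=Q_k$ and $B_{ij}=\nabla_m h_{ij}$ gives the decomposition
\begin{equation*}
\sum_{i,j,p,q}\frac{\bd^2 Q_k}{\bd h_{ij}\,\bd h_{pq}}\nabla^m h_{ij}\,\nabla_m h_{pq} \;=\; \sum_{i,j} D_{ij}Q_k\;\nabla^m h_{ii}\,\nabla_m h_{jj} \;+\; 2\sum_{i<j}\frac{D_iQ_k-D_jQ_k}{\lambda_i-\lambda_j}\,(\nabla_m h_{ij})^{2}.
\end{equation*}

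Next, I would discard the first (diagonal) term on the right by invoking the concavity of $Q_k=S_k/S_{k-1}$ on the positive cone $\Gamma=\{\lambda\in\R^n:\lambda_i>0\}$. This concavity, a classical result in the theory of $k$-Hessian quotients (Marcus–Lopes; see also the reference \cite{SUX04Qk} cited in the introduction and the systematic treatment by Andrews in the setting of curvature flows), is precisely the statement that the Hessian matrix $\bigl(D_{ij}Q_k(\lambda)\bigr)$ is negative semidefinite at every $\lambda\in\Gamma$. Since the hypothesis of positive $Q_k$ curvature together with convexity places $\lambda(p)\in\Gamma$, we conclude
\begin{equation*}
\sum_{i,j} D_{ij}Q_k\;\nabla^m h_{ii}\,\nabla_m h_{jj} \;\leq\; 0,
\end{equation*}
and the claimed inequality follows at once from the previous identity.

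The proof is essentially without obstacle: both ingredients are classical and the whole argument reduces to combining one algebraic identity with one concavity statement. The only mildly delicate point is the interpretation of the off-diagonal coefficient at coinciding eigenvalues, which is dealt with by the usual L'Hôpital convention together with a density argument in $\lambda$-space.
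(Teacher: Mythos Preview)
Your proposal is correct and follows essentially the same approach as the paper: invoke the standard second-derivative identity for symmetric functions of a diagonalized matrix (the paper cites Andrews \cite{A94NonlinearFlow} and \cite{CDN10Qk}), then discard the diagonal Hessian term using the concavity of $Q_k(\lambda)$. The only cosmetic difference is that the paper states the identity specifically for homogeneous-of-degree-one functions, whereas you note it holds more generally.
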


\begin{proof}
We recall the following identity which holds on homogeneous of degree one functions of matrices, given in  \cite{A94NonlinearFlow} (see also in  \cite{CDN10Qk}):
\begin{align*}
\sum_{i,j,p,q}\frac{\bd^2 Q_k}{\bd h_{ij} \bd h_{pq}}\nabla^m h_{ij} \nabla_m h_{pq} =\sum_{i,j}D_{ij}Q_k\nabla^m h_{ii} \nabla_m h_{jj} +  \sum_{i \neq j} \frac{D_i Q_k-D_j Q_k}{\lambda_i -\lambda_j}\nabla^m h_{ij} \nabla_m h_{ji}. 
\end{align*}
By the  concavity of $Q_k(\lambda)$ we have   $\displaystyle \sum_{i,j} D_{ij} Q_k \nabla^m h_{ii} \nabla_m h_{jj} \leq 0$, hence the desired  inequality follows.  
\end{proof}

\begin{proposition}
If  $Q_k(\lambda_1,\cdots,\lambda_n) >0$ and $\lambda_i \geq 0$\, for all $i \in \{1,\cdots,n \}$,  then the following hold 
\begin{align*}
 n^{-2} Q_k^2 \, \lambda_{\max}^{-2} & \leq D_i Q_k \leq 1 \tag{2.1}\label{eq:Pre Uniform Parabolicity} \\
D_i Q_k &\leq  \lambda_i^{-2}Q_k^{2}\tag{2.2} \label{eq:Pre Parabolic Coefficients}\\
\frac{k}{n-k+1} \, Q_k^2 &\leq \Ak  \leq n Q_k^2
\label{eq:Pre Reaction Inequality}\tag{2.3}
\end{align*}
\end{proposition}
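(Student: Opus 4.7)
The plan is to derive all three inequalities from direct polynomial manipulations of $S_k$ and its partial derivatives, starting from the decomposition $S_j = \lambda_i S_{j-1;i} + S_{j;i}$ obtained by splitting $j$-subsets of $\{1,\dots,n\}$ according to whether they contain the index $i$. Applying the quotient rule to $Q_k = S_k/S_{k-1}$ and simplifying the numerator with this decomposition gives the key identity
\[
D_i Q_k \;=\; \frac{S_{k-1;i}^2 \;-\; S_{k;i}\, S_{k-2;i}}{S_{k-1}^2}.
\]

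To obtain the upper bound $D_i Q_k \leq 1$ in \eqref{eq:Pre Uniform Parabolicity} and estimate \eqref{eq:Pre Parabolic Coefficients}, I would discard the non-negative product $S_{k;i} S_{k-2;i}$ and combine the inequalities $S_{k-1;i} \leq S_{k-1}$ (from $S_{k-1} = \lambda_i S_{k-2;i} + S_{k-1;i}$) and $\lambda_i S_{k-1;i} \leq S_k$. These yield $D_i Q_k \leq S_{k-1;i}^2/S_{k-1}^2 \leq 1$ and, after multiplying through by $\lambda_i^2$, $\lambda_i^2 D_i Q_k \leq (\lambda_i S_{k-1;i})^2/S_{k-1}^2 \leq Q_k^2$.

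The lower bound in \eqref{eq:Pre Uniform Parabolicity} is the most delicate step and will need two ingredients. First, I would apply Newton's inequality to the $n-1$ variables $\lambda | i$ to obtain $S_{k-1;i}^2 \geq \tfrac{k(n-k+1)}{(k-1)(n-k)} S_{k-2;i} S_{k;i}$, which rearranges to $S_{k-1;i}^2 - S_{k;i} S_{k-2;i} \geq \tfrac{n}{k(n-k+1)} S_{k-1;i}^2$. Second, I would prove $S_k \leq \tfrac{n \lambda_{\max}}{k} S_{k-1;i}$ by combining the Euler-type identity $k S_{k;i} = \sum_{l \neq i} \lambda_l S_{k-1;\{i,l\}}$ with $\sum_{l \neq i} S_{k-1;\{i,l\}} = (n-k) S_{k-1;i}$ and $\lambda_i \leq \lambda_{\max}$. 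Substituting these into the formula for $D_i Q_k$ gives
\[
D_i Q_k \;\geq\; \frac{k}{n(n-k+1)\,\lambda_{\max}^2}\, Q_k^2 \;\geq\; \frac{Q_k^2}{n^2 \lambda_{\max}^2},
\]
where the last step uses $k(n+1) \geq n+1$. The boundary cases $k=1$ (where $D_i Q_1 = 1$ and Newton is vacuous) and $k=n$ (where $S_{n;i}=0$ and $D_i Q_n = Q_n^2/\lambda_i^2$) are checked separately.

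For \eqref{eq:Pre Reaction Inequality}, my first step is the auxiliary identity $\sum_i \lambda_i^2 S_{j-1;i} = S_1 S_j - (j+1) S_{j+1}$, proved by writing $\sigma(M) = \sum_{l \in M}\lambda_l = S_1 - \sum_{l\notin M}\lambda_l$ and regrouping pairs $(M,l)$ with $l \notin M$ into $(j+1)$-subsets. Substituting into $|A|^2_k S_{k-1}^2 = S_{k-1}\sum_i \lambda_i^2 S_{k-1;i} - S_k \sum_i \lambda_i^2 S_{k-2;i}$ collapses the expression to the clean polynomial identity
\[
|A|^2_k \, S_{k-1}^2 \;=\; k\, S_k^2 \;-\; (k+1)\, S_{k-1}\, S_{k+1}.
\]
The upper bound follows immediately by dropping the nonnegative term and using $k \leq n$, and the lower bound follows from the classical Newton inequality $S_{k-1}S_{k+1} \leq \tfrac{k(n-k)}{(k+1)(n-k+1)}S_k^2$, which produces exactly the coefficient $k/(n-k+1)$. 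The hardest step I anticipate is finding these two polynomial identities — the formula for $D_i Q_k$ and the one for $|A|^2_k S_{k-1}^2$ — since once they are in hand, every inequality reduces to a direct application of Newton–Maclaurin.
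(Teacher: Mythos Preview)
Your argument is correct. For \eqref{eq:Pre Uniform Parabolicity} and \eqref{eq:Pre Parabolic Coefficients} you follow essentially the paper's route: the same formula $D_iQ_k=(S_{k-1;i}^2-S_{k;i}S_{k-2;i})/S_{k-1}^2$, the same upper estimates via $S_{k-1;i}\le S_{k-1}$ and $\lambda_iS_{k-1;i}\le S_k$, and the same Newton-on-$(n-1)$-variables lower bound $D_iQ_k\ge \tfrac{n}{k(n-k+1)}S_{k-1;i}^2/S_{k-1}^2$ (which the paper quotes from \cite{Dieter05Qk} rather than proving). Your bound $kS_k\le n\lambda_{\max}S_{k-1;i}$ via $kS_k=k\lambda_iS_{k-1;i}+\sum_{l\neq i}\lambda_lS_{k-1;\{i,l\}}$ is a minor variant of the paper's monotonicity argument $\lambda_iS_{k-1;i}\le \lambda_1S_{k-1;1}$ together with $S_{k-1;i}\ge S_{k-1;1}$; both land on the same final chain.

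The genuine difference is in \eqref{eq:Pre Reaction Inequality}. The paper obtains the upper bound by summing \eqref{eq:Pre Parabolic Coefficients} over $i$ and simply cites \cite{Dieter05Qk} for the lower bound. Your derivation of the identity $|A|^2_k\,S_{k-1}^2=kS_k^2-(k+1)S_{k-1}S_{k+1}$ is more informative: it makes the argument self-contained, gives the sharper upper bound $|A|^2_k\le kQ_k^2$, and shows transparently that the lower constant $k/(n-k+1)$ is exactly what Newton's inequality $S_{k-1}S_{k+1}\le\tfrac{k(n-k)}{(k+1)(n-k+1)}S_k^2$ produces. The tradeoff is only length: the paper's version is shorter because it outsources the two Newton steps to the literature.
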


\begin{proof}
The case $k=1$ is obvious. We assume that $k \geq 2$ and $\lambda_1=\lambda_{\max}$. we begin my recalling the following from the proof of Lemma 3.6 in \cite{Dieter05Qk}
\begin{equation*}\label{eq:Pre Q_k derivative}
\frac{n}{k(n-k+1)}\frac{S^2_{k-1;i}}{S^2_{k-1}} \leq \frac{\bd Q_k}{\bd \lambda_i}=\frac{S^2_{k-1;i}-S_{k;i}S_{k-2;i}}{S_{k-1}^2} \leq \frac{S^2_{k-1;i}}{S^2_{k-1}}. \tag{2.4}
\end{equation*}
Hence, we have the right hand side inequality of (\ref{eq:Pre Uniform Parabolicity}). For the left hand side inequality, we observe that $\lambda_i \leq \lambda_1$ implies $\lambda_i S_{k-1;i} \leq \lambda_1 S_{k-1;1} $. Therefore, we obtain
\begin{equation*}
n \lambda_1 S_{k-1;1} \geq  \sum^n_{i=1}\lambda_i S_{k-1;i}=\frac{ n \binom{n-1}{k-1}}{\binom{n}{k}}  S_k =k S_k. 
\end{equation*}
Also, $S_{k-1;i} \geq S_{k-1;1}$ and combining the above  yields 
\begin{equation*}
\frac{\bd Q_k}{\bd \lambda_i} \geq \frac{n}{k(n-k+1)}\frac{S^2_{k-1;1}}{S^2_{k-1}} \geq \frac{1}{k\lambda_1^2}\frac{\lambda_1^2 S^2_{k-1;1}}{S^2_{k-1}}\geq \frac{k}{n^2\lambda_1^2}\frac{S^2_{k}}{S^2_{k-1}} 
\geq \frac{1}{n^2}\frac{Q_k^2}{\lambda_1^2}. 
\end{equation*}
Next, to show (\ref{eq:Pre Parabolic Coefficients}), we employ (\ref{eq:Pre Q_k derivative}) again to obtain
\begin{equation*}
\frac{\bd Q_k}{\bd \lambda_i}  \leq \frac{S_{k-1;i}^2}{S_{k-1}^2} = \frac{1}{\lambda_i^2}\frac{\lambda_i^2 S_{k-1;i}^2}{S_{k-1}^2}\leq \frac{1}{\lambda_i^2}\frac{S_k^2}{S_{k-1}^2}=\frac{Q_k^2}{\lambda_i^2}.
\end{equation*}
The left hand side of  ($\ref{eq:Pre Reaction Inequality}$)  is proven in Lemma 3.7 in \cite{Dieter05Qk}. The right hand side inequality readily follows  by (\ref{eq:Pre Parabolic Coefficients}),
since  $\displaystyle \Ak \coloneq \sum_{i=1}^n \lambda^2_i D_i Q_k  \leq n Q_k^2 $.
\end{proof}

\begin{proposition} Assume $\Omega_0$ and $\Sigma_0$ satisfy the assumptions in \emph{Theorem \ref{thm:INT Existence} }. Let $\Sigma_t$ be a convex complete smooth graph solution of \eqref{eq:INT Qkflow} with the positive $Q_k$ curvature. Then, the following hold
\begin{align*}
&\bd_t \psi =\cL \, \psi \tag{2.5}\label{eq:Pre psi_t}\\
&\bd_t  g_{ij} = -2 Q_k h_{ij}  \tag{2.6}\label{eq:Pre g_t}\\
&\bd_t  g^{ij} = 2 Q_k h^{ij}  \tag{2.7}\label{eq:Pre 1/g_t}\\
&\bd_t  \vec{n} = -(\nabla_j Q_k) F^{j}  \tag{2.8}\label{eq:Pre n_t}\\
&\bd_t h_{ij} = \cL \, h_{ij}+\frac{\bd^2 Q_k}{\bd h_{pq}\bd h_{rs}}\nabla_i h_{pq}\nabla_j h_{rs}-2Q_k h_{il}h^l_j+\Ak\,  h_{ij} \tag{2.9}\label{eq:Pre h_t}\\
&\bd_t Q_k  = \cL\, Q_k +\Ak Q_k   \tag{2.10}\label{eq:Pre Qk_t}\\
&\bd_t \grad^2  =  \cL \, \grad^2 - 6\|\nabla \grad \|^2_\cL- 2\Ak \, \grad^2   \tag{2.11}\label{eq:Pre nu_t}\\
\end{align*}
\end{proposition}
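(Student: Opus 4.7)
\emph{Plan.} The strategy is to derive the identities (2.5)--(2.11) in order of increasing complexity, starting from the simple variation formulas and building up to the curvature evolutions.

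First, we obtain (2.6)--(2.8) directly from $\bd_t F = Q_k \vec n$. Differentiating $g_{ij} = \langle F_i, F_j\rangle$ in $t$ and using the Weingarten formula $\nabla_i \vec n = -h_i^m F_m$ to compute $\bd_t F_i = \nabla_i(Q_k \vec n) = (\nabla_i Q_k)\vec n - Q_k h_i^m F_m$, then pairing with tangent vectors, yields (2.6); then (2.7) follows from $g^{ij} g_{jk} = \delta^i_k$. For (2.8), the unit-length condition $|\vec n|^2 = 1$ forces $\bd_t \vec n$ to be tangential, and its components are determined by $\langle \bd_t \vec n, F_j\rangle = -\langle \vec n, \bd_t F_j\rangle = -\nabla_j Q_k$.

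The main computational core is (2.9), a Simons-type identity for the fully nonlinear flow. Starting from $h_{ij} = \langle \nabla_i \nabla_j F, \vec n\rangle$ and using (2.8) together with $\bd_t F = Q_k \vec n$, a standard normal-flow computation gives $\bd_t h_{ij} = \nabla_i \nabla_j Q_k - Q_k h_{il}h^l_j$. Expanding by the chain rule
\[
\nabla_i \nabla_j Q_k = \frac{\bd Q_k}{\bd h_{pq}}\nabla_i \nabla_j h_{pq} + \frac{\bd^2 Q_k}{\bd h_{pq}\bd h_{rs}}\nabla_i h_{pq}\nabla_j h_{rs},
\]
and then using the Ricci identity together with Codazzi to replace $\nabla_i \nabla_j h_{pq}$ by $\nabla_p \nabla_q h_{ij}$ modulo Riemann-curvature corrections, and finally using the Gauss equation $R_{ikjl} = h_{ij}h_{kl} - h_{il}h_{jk}$, yields (2.9); the curvature corrections assemble precisely into the reaction terms $\Ak h_{ij} - Q_k h_{il}h^l_j$, which combine with the leftover $-Q_k h_{il}h^l_j$ from the normal-flow step to give the final $\Ak h_{ij} - 2Q_k h_{il}h^l_j$. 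Tracking these Riemann corrections and verifying that they collapse to exactly this form is the main book-keeping obstacle.

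The remaining identities are algebraic consequences. Working in an orthonormal frame diagonalising $h$ at a point, $\bd_t h^i_j = 2Q_k h_{il}h^l_j + \bd_t h_{ij}$ by (2.7), and contracting with $\tfrac{\bd Q_k}{\bd h^i_j}$ yields (2.10) after using Euler's identities $\tfrac{\bd Q_k}{\bd h_{ij}} h_{ij} = Q_k$ and $\tfrac{\bd Q_k}{\bd h_{ij}} h_{il}h^l_j = \Ak$ together with the symmetry of the coefficient $\tfrac{\bd Q_k}{\bd h_{ij}}\tfrac{\bd Q_k}{\bd h_{pq}}$ under $(i,j)\leftrightarrow(p,q)$, which matches $\tfrac{\bd Q_k}{\bd h_{ij}} \cL h_{ij}$ with the leading term of $\cL Q_k$. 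For (2.5), on $\{\psi > 0\}$ we have $\psi = M - \bar u$; $\bd_t \bar u = Q_k \grad^{-1}$ together with $\nabla_i \nabla_j \bar u = h_{ij}\grad^{-1}$ (from $\nabla_i \nabla_j F = h_{ij}\vec n$) and Euler's identity gives $\cL \bar u = Q_k \grad^{-1} = \bd_t \bar u$, hence $\bd_t \psi = \cL \psi$ pointwise; on $\{\psi = 0\}$ both sides vanish. Finally, (2.11) is obtained by setting $v \coloneq \grad^{-1} = \langle \vec n, \vec e_{n+1}\rangle$ and computing $\bd_t v = -\langle \nabla Q_k, \nabla \bar u\rangle$ from (2.8), $\nabla_i v = -h_i^m \nabla_m \bar u$, and $\nabla_i \nabla_j v = -(\nabla_i h_j^m)\nabla_m \bar u - h_{im}h_j^m\, v$. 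Contracting the latter with $\tfrac{\bd Q_k}{\bd h_{ij}}$, invoking the Codazzi identity $\tfrac{\bd Q_k}{\bd h_{ij}} \nabla_i h_j^m = \nabla^m Q_k$ and the identity $\tfrac{\bd Q_k}{\bd h_{ij}} h_{im}h_j^m = \Ak$, yields $\bd_t v = \cL v + \Ak v$. Passing to $\grad^2 = v^{-2}$ via $\bd_t \grad^2 = -2v^{-3}\bd_t v$ and $\cL \grad^2 = -2v^{-3}\cL v + 6v^{-4}\langle \nabla v,\nabla v\rangle_\cL$, together with $\|\nabla \grad\|^2_\cL = v^{-4}\langle \nabla v,\nabla v\rangle_\cL$, produces (2.11).
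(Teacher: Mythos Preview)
Your proposal is correct and follows essentially the same approach as the paper. The paper defers (2.6)--(2.10) to the references \cite{A94NonlinearFlow} and \cite{Dieter05Qk}, so your outline actually supplies more detail than the paper does there; for (2.5) your computation matches the paper's verbatim, and for (2.11) the only difference is organizational---the paper computes $\cL\grad$ and $\bd_t\grad$ directly, whereas you first establish the cleaner identity $(\bd_t-\cL)v=\Ak v$ for $v=\grad^{-1}$ and then convert, which is arguably tidier but amounts to the same calculation.
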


\begin{proof}
\eqref{eq:Pre g_t} - \eqref{eq:Pre Qk_t} are given in \cite{A94NonlinearFlow} (see also in  \cite{Dieter05Qk}). Equation \eqref{eq:Pre psi_t} readily follows from the definition 
$\psi \coloneq (M-\bar u)_+$,
where $\bar u \coloneq  \langle  F , \vec{e}_{n+1} \rangle$ and 
\begin{align*}
\cL \, \bar u = \cL\, \langle  F , \vec{e}_{n+1} \rangle =  \langle \frac{\bd Q_k}{\bd h_{ij}}\,  \nabla_i \nabla_j F , \vec{e}_{n+1} \rangle = \langle \frac{\bd Q_k}{\bd h_{ij}}\,  h_{ij}\vec{n} , \vec{e}_{n+1} \rangle =\langle Q_k \,  \vec{n} , \vec{e}_{n+1} \rangle = \langle \bd_t F ,\vec{e}_{n+1} \rangle = \bd_t \bar u.
\end{align*}

To show \eqref{eq:Pre nu_t},  we derive from $\grad \coloneq \langle  \vec{n} ,\vec{e}_{n+1}\rangle^{-1}$ that $\nabla_i \grad = -\langle \nabla_i \vec{n}, \vec{e}_{n+1} \rangle\, \grad^2= \langle h_{ij}F^j ,\vec{e}_{n+1} \rangle \,\grad^2$. Hence, 
\begin{align*}
\cL \, \grad = &  \frac{\bd Q_k}{\bd h_{ij}} \nabla_i \nabla_j \grad 
  =  \frac{\bd Q_k}{\bd h_{ij}} \nabla_i \big( \langle h_{jm}F^m, \vec{e}_{n+1} \rangle \, \grad^2\big )\\
  = &   \langle \big( \frac{\bd Q_k}{\bd h_{ij}} \nabla_i h_{jm} \big ) F^m, \vec{e}_{n+1} \rangle \, \grad^2 + \langle \frac{\bd Q_k}{\bd h_{ij}} h^m_i h_{jm} \, \vec{n},\vec{e}_{n+1} \rangle \,\grad^2 + 2 \frac{\bd Q_k}{\bd h_{ij}} \langle h_{jm}F^m,\vec{e}_{n+1} \rangle \langle h_{il}F^l, \vec{e}_{n+1} \rangle \, \grad^3  \\
  = &\langle (\nabla_m Q_k ) F^m, \vec{e}_{n+1} \rangle \, \grad^2 +  \Ak  \grad + 2\grad^{-1}\|\nabla \grad\|^2_{\cL}. 
\end{align*}
On the other hand, \eqref{eq:Pre n_t} gives $\bd_t \grad = \langle (\nabla_j Q_k ) F^j, \vec{e}_{n+1} \rangle \, \grad^2$. Therefore,
\begin{align*}
\bd_t \grad^2 &= 2\grad \bd_t \grad = 2\grad( \langle (\nabla_m Q_k ) F^m, \vec{e}_{n+1} \rangle \, \grad^2)\\
&= 2\grad \cL \, \grad -4 \|\nabla \grad \|^2_{\cL} -2\Ak \grad^2 =\cL \, \grad^2 - 6\|\nabla \grad \|^2_\cL- 2\Ak \grad^2.
\end{align*}
\end{proof}

If $\psi \coloneq (M-\bar u)_+$, for a given $M>0$, then we have the following two estimates.

\begin{theorem}[Gradient estimate]\label{thm:Pre Gradient}
 Assume $\Omega_0$ and $\Sigma_0$ satisfy the assumptions in \emph{Theorem \ref{thm:INT Existence}}. Let $\Sigma_t$ be a convex complete smooth graph solution of \eqref{eq:INT Qkflow} with the positive $Q_k$ curvature defined on $M^n \times [0,T]$, for some $T>0$. Then
\begin{equation*}
\psi(p,t)\grad(p,t) \leq \sup_{p \in M^n}\psi (p,0)\grad (p,0). 
\end{equation*}
\end{theorem}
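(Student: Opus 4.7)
My plan is to apply the parabolic maximum principle to the function $\phi \coloneq \psi^2 \grad^2$, following the Ecker--Huisken philosophy. The key inputs are the evolution equations \eqref{eq:Pre psi_t} and \eqref{eq:Pre nu_t}, together with the facts that $\grad \geq 1$, $\Ak \geq 0$, and that the zero set of $\psi$ makes $\phi$ vanish on the ``boundary'' of its support. I would first derive from \eqref{eq:Pre psi_t} that
\[
\bd_t \psi^2 = 2\psi\, \cL\psi = \cL(\psi^2) - 2\|\nabla\psi\|_{\cL}^2,
\]
and combine this with \eqref{eq:Pre nu_t} via the product rule for $\cL$ to obtain
\[
(\bd_t - \cL)\phi \;=\; -2\grad^2\|\nabla\psi\|_{\cL}^2 - 6\psi^2\|\nabla\grad\|_{\cL}^2 - 8\psi\grad\,\langle\nabla\psi,\nabla\grad\rangle_{\cL} - 2\Ak\,\psi^2\grad^2.
\]

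Next, at an interior spacetime maximum of $\phi$ (where necessarily $\psi>0$) the spatial gradient of $\phi$ vanishes, which forces the pointwise identity $\grad\,\nabla\psi = -\psi\,\nabla\grad$. Substituting this into the three gradient terms above yields
\[
-2\psi^2\|\nabla\grad\|_{\cL}^2 - 6\psi^2\|\nabla\grad\|_{\cL}^2 + 8\psi^2\|\nabla\grad\|_{\cL}^2 = 0,
\]
so the whole expression collapses to $(\bd_t - \cL)\phi = -2\Ak\,\psi^2\grad^2 \leq 0$ at such a maximum, by \eqref{eq:Pre Reaction Inequality}. The parabolic maximum principle then precludes $\phi$ from exceeding its initial supremum at an interior maximum.

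To close the argument on the non-compact $M^n$, I would note that $\phi$ is compactly supported in space for each $t$: the convexity of $\Sigma_t$ together with assumptions (ii)--(iii) on the domain $\Omega_0$ of Theorem \ref{thm:INT Existence} forces the sublevel set $\{\bar u < M\}$ to be bounded (it is a compact graph over a bounded subset of $\Omega_t$), and $\phi$ vanishes wherever $\bar u \geq M$. Thus $\sup_{M^n} \phi(\cdot,t)$ is attained in the interior of the support of $\psi$ and the maximum principle applies as in the compact case; taking square roots gives the desired bound on $\psi\,\grad$.

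The only genuinely delicate step is the cancellation in the second paragraph: it is precisely the pairing of $\psi^2$ with $\grad^2$ (rather than, say, $\psi\grad$ or $\psi^2\grad$) that makes the coefficients $-2,-6,+8$ combine to zero, so one must choose the correct product. Everything else is routine once the evolution equations of the preceding proposition are in hand.
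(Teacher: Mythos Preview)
Your proof is correct and follows essentially the same approach as the paper. The only cosmetic difference is that the paper packages the three gradient terms as the single expression $-\langle 6\psi\nabla\grad + 2\grad\nabla\psi,\nabla(\grad\psi)\rangle_{\cL}$, which is manifestly proportional to $\nabla\phi$ on $\{\grad\psi>0\}$ and hence lets the maximum principle apply directly as a first-order term, whereas you verify the cancellation $-2-6+8=0$ by hand at a critical point; both routes are equivalent.
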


\begin{proof}
By combining (\ref{eq:Pre psi_t}) and (\ref{eq:Pre nu_t}), we have
$$\bd_t  (\grad^2 \psi^2) =\cL ( \grad^2 \psi^2) -  \langle 6 \psi\nabla \grad + 2 \grad  \nabla \psi, \nabla (\grad \psi )\rangle_\cL - 2\Ak \grad^2\psi^2.$$ 
Since the conditions (ii), (iii) in Theorem \ref{thm:INT Existence} mean that $\psi$ is compactly supported, it follows by the  maximum principle that  
$\displaystyle \sup_{p \in M^n}\grad(p,t)\psi(p,t) \leq \sup_{p \in M^n}\grad(p,0)\psi(p,0) $, which yields the desired result.
\end{proof}

\begin{theorem}[Lower bound of speed]\label{thm:Pre Lower Bound of Q_k}
 Assume $\Omega_0$ and $\Sigma_0$ satisfy the assumptions in \emph{Theorem \ref{thm:INT Existence}}. Let $\Sigma_t$ be a convex complete smooth graph solution of \eqref{eq:INT Qkflow} with the positive $Q_k$ curvature defined on $M^n \times [0,T]$,  for some $T>0$. Then, 
\begin{equation*}
\psi(p,t)^{-1}  Q_k(p,t) \geq  \inf_{p \in M^n}\psi(p,0)^{-1} Q_k(p,0).
\end{equation*}
\end{theorem}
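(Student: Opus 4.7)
The plan is to analyze the ratio $w \coloneq \psi^{-1} Q_k$ on the open set $U \coloneq \{p\in M^n : \psi(p,t)>0\}$ and apply the parabolic minimum principle. Conditions (ii)--(iii) of Theorem \ref{thm:INT Existence} guarantee $\bar u \to +\infty$ at $\bd\Omega_t$ and at spatial infinity, so $\overline{U}$ is compact; since $Q_k$ is smooth and strictly positive on $\overline U$, it is bounded below by a positive constant there, which forces $w \to +\infty$ on $\bd U$ from inside. Thus the infimum of $w(\cdot,t)$ is attained at some interior point.

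First I would combine \eqref{eq:Pre psi_t} and \eqref{eq:Pre Qk_t} with the product rule $\cL(fg)=f\cL g + g\cL f + 2\langle\nabla f,\nabla g\rangle_\cL$ applied to the decomposition $Q_k = w\psi$. The identity $\bd_t\psi = \cL\psi$ makes the $w\,\cL\psi$ contributions cancel against $w\,\bd_t\psi$, leaving the linear parabolic equation
\begin{equation*}
\bd_t w \;=\; \cL w \;+\; \frac{2}{\psi}\,\langle \nabla w, \nabla \psi\rangle_\cL \;+\; \Ak\, w
\end{equation*}
on $U$. By \eqref{eq:Pre Reaction Inequality} we have $\Ak \geq \tfrac{k}{n-k+1}Q_k^2 \geq 0$, so the reaction coefficient in front of $w$ is non-negative, and $w>0$ throughout $U$.

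Finally I would invoke the minimum principle. For each $t\in[0,T]$, set $m(t) \coloneq \inf_{p \in M^n} w(p,t)$; by the blow-up of $w$ on $\bd U$, this infimum is attained at some interior $p^*(t)\in U$. At $p^*(t)$ the drift term vanishes since $\nabla w = 0$, and $\cL w \geq 0$ by the second-order condition at an interior minimum, so the evolution equation gives $\bd_t w(p^*(t),t) \geq \Ak\, w \geq 0$. A standard envelope-type argument (Hamilton's trick) then yields $m(t)\geq m(0)$, which is exactly the claim. The only real obstacle is the non-compactness of $M^n$, which prevents a direct application of the maximum principle; this is resolved precisely by the compact-support structure of $\psi$ dictated by assumptions (ii)--(iii), which traps the minimum of $w$ inside the bounded set $U$.
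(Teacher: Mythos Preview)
Your argument is correct, but the paper takes a slightly different (and somewhat slicker) route: instead of tracking the minimum of $w=\psi^{-1}Q_k$, it applies the \emph{maximum} principle to the reciprocal quantity $v\coloneq\psi Q_k^{-1}$, deriving from \eqref{eq:Pre psi_t} and \eqref{eq:Pre Qk_t} the identity
\[
\bd_t v \;=\; \cL v \;-\; 2Q_k\,\langle \nabla v,\nabla Q_k^{-1}\rangle_{\cL} \;-\; \Ak\, v.
\]
The advantage of working with $v$ rather than $w$ is that $v$ extends continuously by zero across $\bd U$ and is genuinely compactly supported on all of $M^n$, so the maximum is automatically attained and the standard parabolic maximum principle applies with no discussion of boundary behavior. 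Your approach requires the additional (correct, but extra) step of showing that $w\to+\infty$ on $\bd U$ in order to trap the interior minimum; this in turn relies on knowing that $Q_k$ stays bounded away from zero on $\overline U$, which you justify via compactness of $\overline U$. Both arguments are valid and yield the same conclusion; the paper's choice simply sidesteps the blow-up analysis by inverting the quantity.
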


\begin{proof}
From  \eqref{eq:Pre psi_t}, \eqref{eq:Pre Qk_t}, we derive 
$$ \bd_t (\psi Q_k^{-1}) = \cL (\psi Q_k^{-1})- 2Q_k\langle \nabla (\psi Q_k^{-1}) ,\nabla Q_k^{-1} \rangle_{\cL}-\Ak Q_k^{-1}\psi.$$ Thus,  the maximum principle gives the desired result.
\end{proof}

The following result will be used in the last section. 

\begin{corollary}[Lower bound of $Q_k$ under the Mean curvature flow]\label{cor:Pre Lower Bound of Q_k}
 Assume $\Omega_0$ and $\Sigma_0$ satisfy the assumptions in \emph{Theorem \ref{thm:INT Existence}}. Let $\Sigma_t$ be a strictly convex complete smooth graph solution of the mean curvature flow \emph{($*^n_1$)} defined on $M^n \times [0,T]$ for some $T>0$. Then, 
\begin{equation*}
\psi(p,t)^{-1}  Q_k(p,t) \geq  \inf_{p \in M^n}\psi(p,0)^{-1} Q_k(p,0). 
\end{equation*}
\end{corollary}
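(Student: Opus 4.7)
The plan is to mirror the proof of Theorem \ref{thm:Pre Lower Bound of Q_k}, with the linearized operator $\cL$ of the $Q_k$-flow replaced by the Laplacian $\Delta$ of the mean curvature flow. The equation $\bd_t \psi = \Delta \psi$ is immediate from \eqref{eq:Pre psi_t} specialized to $k=1$, so the crux is to derive a suitable evolution inequality for $Q_k$ (with $k \geq 2$) under the mean curvature flow, one that retains enough structure for the maximum-principle argument applied to $f = \psi Q_k^{-1}$ to go through.

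Under the mean curvature flow, equations \eqref{eq:Pre 1/g_t} and \eqref{eq:Pre h_t} specialize to
\[
\bd_t g^{ij} = 2H h^{ij}, \qquad \bd_t h_{ij} = \Delta h_{ij} - 2H h_{il} h^l_j + |A|^2 h_{ij}.
\]
Combining these, I would verify the clean identity $\bd_t h^i_j = \Delta h^i_j + |A|^2 h^i_j$, since the contribution $2H (h^2)^i_j$ coming from $\bd_t g^{ik}$ exactly cancels the contribution from $-2H h_{il}h^l_j$ after raising the index. Viewing $Q_k$ as a smooth symmetric concave function of the eigenvalues of $h^i_j$, the Euler homogeneity identity $(\bd Q_k/\bd h^i_j)\, h^i_j = Q_k$ and the concavity inequality of Proposition \ref{prop:Pre 3rd Order} give $(\bd Q_k/\bd h^i_j)\, \Delta h^i_j \geq \Delta Q_k$. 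Combining yields the desired evolution inequality
\[
\bd_t Q_k \;\geq\; \Delta Q_k + |A|^2 Q_k.
\]

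With $f = \psi Q_k^{-1}$, a direct calculation analogous to the one in Theorem \ref{thm:Pre Lower Bound of Q_k} then produces
\[
\bd_t f \;\leq\; \Delta f - 2 Q_k \langle \nabla f,\, \nabla Q_k^{-1} \rangle - \psi Q_k^{-1} |A|^2,
\]
so the evolution of $f$ is governed by the heat operator together with a first-order transport term and a non-positive zero-order term. Since conditions (ii) and (iii) of Theorem \ref{thm:INT Existence} force $\psi$ to be compactly supported on $M^n$, the parabolic maximum principle gives $\sup_p f(p,t) \leq \sup_p f(p,0)$, and inverting yields the desired estimate $\psi(p,t)^{-1} Q_k(p,t) \geq \inf_p \psi(p,0)^{-1}Q_k(p,0)$.

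The principal technical point is establishing $\bd_t Q_k \geq \Delta Q_k + |A|^2 Q_k$ under the mean curvature flow; once the clean form $\bd_t h^i_j = \Delta h^i_j + |A|^2 h^i_j$ is observed, the rest is a direct adaptation of the proof of Theorem \ref{thm:Pre Lower Bound of Q_k}.
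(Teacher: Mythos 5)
Your argument is correct and follows essentially the same route as the paper: the paper's proof likewise deduces $\bd_t Q_k \geq \Delta Q_k$ under the MCF from the concavity of $Q_k$ (misprinted there as ``convex'') together with the linearity of $H=Q_1$, and then repeats the maximum-principle computation of Theorem \ref{thm:Pre Lower Bound of Q_k} for $f=\psi Q_k^{-1}$; your version just makes the identity $\bd_t h^i_j=\Delta h^i_j+|A|^2h^i_j$ and the reaction term $|A|^2Q_k$ explicit. One small caveat: Proposition \ref{prop:Pre 3rd Order} by itself only bounds the second-derivative term by the off-diagonal sum $2\sum_{i<j}\frac{D_iQ_k-D_jQ_k}{\lambda_i-\lambda_j}|\nabla h_{ij}|^2$, so to conclude $(\bd Q_k/\bd h^i_j)\,\Delta h^i_j\geq \Delta Q_k$ you should also invoke the standard fact that this sum is non-positive for a concave symmetric function of the eigenvalues (equivalently, that $Q_k$ is concave as a function of the matrix $h^i_j$).
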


\begin{proof}
Since $Q_k$ is a convex function and the mean curvature $H\coloneq Q_1$ is a linear function of $\lambda$, we have $\bd_t Q_k \geq \Delta Q_k$ under the MCF. Hence,  the desired result follows in the same manner as  Theorem \ref{thm:Pre Lower Bound of Q_k}.
\end{proof}

\section{Speed estimate} In this section we will obtain a local upper bound on the speed $Q_k$. We will use the gradient function $\upsilon$ to
localize our estimate in the spirit of the well known  Caffarelli, Nirenberg, and Spruck estimate  in \cite{CNS88trick}. A similar technique was used by
Ecker and Huisken  \cite{HE91MCFexist} in the context of the Mean curvature flow  to obtain a  local bound  on  $|A|^2$. Our proof is similar to that
in \cite{HE91MCFexist}.

\begin{theorem}[Speed estimate]\label{thm:SE Speed Estimate}
 Assume $\Omega_0$ and $\Sigma_0$ satisfy the assumptions in \emph{Theorem \ref{thm:INT Existence}}. Let $\Sigma_t$ be a convex complete smooth graph solution of \eqref{eq:INT Qkflow} with the positive $Q_k$ curvature defined on $M^n\times [0,T)$. Given a constant $M$, we have 
\begin{equation*}
 (\psi Q_k)^2(p,t) \leq  \max\bigg\{ 10n^2 \sup_{Q_M} \grad^4(\cdot,t), \, 2 \sup_{Q_M} \grad^2(\cdot,t) \sup_{p\in M^n} (Q_k\psi)^2(\cdot,0)\bigg\}
\end{equation*}
where $Q_M = \{(p,s)\in M^n\times [0,t]:\bar u(p,s) \leq M\}$. 
\end{theorem}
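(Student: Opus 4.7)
The plan is to adapt the Caffarelli--Nirenberg--Spruck localization technique from \cite{CNS88trick}, as parabolized by Ecker and Huisken in \cite{HE91MCFexist}, to the fully-nonlinear $Q_k$-flow. By Theorem \ref{thm:Pre Gradient} applied with the cut-off $\psi$, the quantity $\upsilon^{*2}\coloneq \sup_{Q_M}\upsilon^2$ is finite, and one may introduce the test function
$$ W \coloneq \frac{(\psi Q_k)^2\, \upsilon^2}{2\upsilon^{*2}-\upsilon^2},$$
whose denominator lies in $[\upsilon^{*2},2\upsilon^{*2}]$ on $Q_M$ and which is compactly supported through $\psi$; in particular $W$ attains its maximum on $Q_M$.

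First, I would compute $(\partial_t-\mathcal L)W$ using the evolution equations \eqref{eq:Pre psi_t}, \eqref{eq:Pre Qk_t}, \eqref{eq:Pre nu_t} and the parabolic Leibniz rule $(\partial_t-\mathcal L)(fg) = f(\partial_t-\mathcal L)g + g(\partial_t-\mathcal L)f - 2\langle\nabla f,\nabla g\rangle_{\mathcal L}$. The rational factor $h(\upsilon^2)\coloneq \upsilon^2/(2\upsilon^{*2}-\upsilon^2)$ is engineered so that the dangerous reaction $+2|A|^2_k(\psi Q_k)^2$ coming from $\partial_t Q_k^2$ is dominated by the $-2|A|^2_k\upsilon^2$ appearing in $\partial_t\upsilon^2$; a direct calculation shows that the $|A|^2_k$-terms collapse into the non-positive quantity $-2|A|^2_k(\psi Q_k)^2 \upsilon^4/(2\upsilon^{*2}-\upsilon^2)^2$. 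The remaining contributions to $(\partial_t-\mathcal L)W$ are all of gradient type: negative squares $-2h\, Q_k^2 \|\nabla\psi\|_{\mathcal L}^2$, $-2h\,\psi^2\|\nabla Q_k\|_{\mathcal L}^2$, $-6(\psi Q_k)^2 h'(\upsilon^2)\|\nabla\upsilon\|_{\mathcal L}^2$, a further negative square coming from the convexity of $h$, the cross term $-8h\,\psi Q_k\langle\nabla\psi,\nabla Q_k\rangle_{\mathcal L}$, and the cross term $-2\langle\nabla(\psi Q_k)^2,\nabla h\rangle_{\mathcal L}$.

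Next, I would apply the maximum principle. If $\max_{Q_M} W$ is attained at $t=0$, then $h(\upsilon^2)\leq 1$ gives $\max W\leq \sup_{p\in M^n}(\psi Q_k)^2(p,0)$, and unwinding via $(\psi Q_k)^2 = W/h \leq 2\upsilon^{*2} W$ (using $\upsilon\geq 1$) yields the second alternative in the stated max. Otherwise the maximum is attained at some interior point $(p_0,t_0)$ with $t_0>0$, where $\nabla W=0$ forces
$$\nabla(\psi Q_k) \;=\; -\,\frac{2\upsilon^{*2}\,\psi Q_k}{\upsilon\,(2\upsilon^{*2}-\upsilon^2)}\,\nabla\upsilon \quad\text{at } (p_0,t_0).$$
Substituting this into the cross terms and applying Cauchy--Schwarz to absorb $-8h\,\psi Q_k\langle\nabla\psi,\nabla Q_k\rangle_{\mathcal L}$ into the negative squares involving $\|\nabla\psi\|_{\mathcal L}^2$ and $\|\nabla Q_k\|_{\mathcal L}^2$, the inequality $(\partial_t-\mathcal L)W\geq 0$ at $(p_0,t_0)$ together with the reaction estimate and $|A|^2_k\geq \frac{k}{n-k+1}Q_k^2\geq \frac{1}{n}Q_k^2$ from \eqref{eq:Pre Reaction Inequality} produces $W\leq 5n^2\upsilon^{*2}$, whence $(\psi Q_k)^2\leq 10n^2\upsilon^{*4}$, the first alternative.

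The main obstacle is the book-keeping for the cross term $\langle\nabla\psi,\nabla Q_k\rangle_{\mathcal L}$: since $\nabla W=0$ determines only the combination $\nabla(\psi Q_k)$ and not $\nabla\psi$ and $\nabla Q_k$ individually, the Cauchy--Schwarz absorption must be performed with exactly the coefficients permitted by the leading negative terms, and the resulting positive residue must then be killed by the $-6(\psi Q_k)^2 h'(\upsilon^2)\|\nabla\upsilon\|_{\mathcal L}^2$ term via the gradient constraint. It is this delicate balancing that fixes the constant $10n^2$ appearing in the statement.
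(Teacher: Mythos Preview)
Your test function $W=(\psi Q_k)^2\,\upsilon^2/(2\upsilon^{*2}-\upsilon^2)$ coincides with the paper's $f\psi^2$ where $f=Q_k^2\varphi(\upsilon^2)$, and you correctly identify the reaction identity $h-\upsilon^2 h'=-h^2$ that turns the $|A|^2_k$ contribution into the good term $-2|A|^2_k\,h\,W$. So the overall architecture matches the paper.

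The gap is in the cross-term bookkeeping. Your plan is to Cauchy--Schwarz the term $-8h\,\psi Q_k\langle\nabla\psi,\nabla Q_k\rangle_{\cL}$ against the available $-2h\,\psi^2\|\nabla Q_k\|_{\cL}^2$ and $-2h\,Q_k^2\|\nabla\psi\|_{\cL}^2$, and then absorb whatever is left into $-6(\psi Q_k)^2 h'\|\nabla\upsilon\|_{\cL}^2$ using $\nabla W=0$. If you actually carry this out, the net coefficient of $(\psi Q_k)^2\|\nabla\upsilon\|_{\cL}^2$ at the maximum comes out \emph{positive}: the pieces $-6h'-4h''\upsilon^2$ from the evolution of $h$ together with the contribution of $-2\langle\nabla(\psi Q_k)^2,\nabla h\rangle_{\cL}$ evaluated via $\nabla W=0$ sum to $+\,4\upsilon^{*2}/(2\upsilon^{*2}-\upsilon^2)^2>0$. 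So the $\|\nabla\upsilon\|_{\cL}^2$ term cannot ``kill the residue''; the argument does not close this way, and this is not what fixes the constant $10n^2$.

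What the paper does instead is a two-stage rewriting. First it treats $f=Q_k^2\varphi$ alone (no $\psi$), converting the $Q_k$--$\upsilon$ cross term into a drift $-\varphi^{-1}\langle\nabla\varphi,\nabla f\rangle_{\cL}$ plus a Cauchy--Schwarz residue $\tfrac{3}{2}\varphi^{-1}Q_k^2\|\nabla\varphi\|_{\cL}^2$; combined with $-6\varphi'-4\varphi''\upsilon^2$ this gives the \emph{negative} coefficient $-4\upsilon^{*2}\varphi/(2\upsilon^{*2}-\upsilon^2)^2$ in front of $\|\nabla\upsilon\|_{\cL}^2$. Only then is $\psi^2$ multiplied in; the new cross terms are again converted to drifts in $f\psi^2$, and one last Cauchy--Schwarz on the resulting $\upsilon$--$\psi$ cross produces a $\|\nabla\upsilon\|_{\cL}^2$ term that \emph{exactly cancels} the negative one. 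After this cancellation the only surviving positive term is $(6+4\upsilon^{*2}\upsilon^{-2})f\,\|\nabla\psi\|_{\cL}^2$, and the proof closes via the estimate
\[
\|\nabla\psi\|_{\cL}^2=\sum_i D_iQ_k\,|\nabla_i\psi|^2\le \sum_i D_iQ_k\le n,
\]
which uses $D_iQ_k\le 1$ from \eqref{eq:Pre Uniform Parabolicity}. It is this bound on $\|\nabla\psi\|_{\cL}^2$---absent from your outline---that produces the $10n\,\upsilon^{*2}f$ and hence the constant $10n^2$, not a balance of $\|\nabla\upsilon\|_{\cL}^2$ terms.
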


\begin{proof}
Given a time $T_0 \in [0,T)$, we define the set  $Q_M = \{(p,s)\in M^n\times [0,T_0]:\bar u(p,s) \leq M\}$ and we will prove  that 
\begin{equation*}
 (\psi Q_k)^2(p,T_0) \leq  \max\bigg\{ 10n^2 \sup_{Q_M} \grad^4(\cdot,t), \, 2 \sup_{Q_M} \grad^2(\cdot,t) \sup_{p\in M^n} (Q_k\psi)^2(\cdot,0)\bigg\}. 
\end{equation*}
Let  $\cK \coloneq \sup_{Q_M}  \grad^2$ and define the  function $\varphi$ depending on $\grad^2$ by
\begin{equation*}
\varphi (\grad^2) = \frac{\grad^2}{2\cK - \grad^2}. 
\end{equation*}
The evolution equation of $\grad^2$ in (\ref{eq:Pre nu_t}) yields
\begin{align*}
\frac{\bd}{\bd t} \varphi(\grad^2) & = \varphi' ( \cL\, \grad^2  - 6\|\nabla \grad \|^2_\cL - 2\Ak \grad^2 ) = \cL \, \varphi -\varphi''\|\nabla \grad^2 \|^2_\cL - \varphi' ( 6\|\nabla \grad \|^2_\cL+2\Ak \grad^2) 
\end{align*}
which  combined with \eqref{eq:Pre Qk_t} yields 
\begin{align*}
\frac{\bd}{\bd t} (Q_k^2 \varphi) &= \cL(Q_k^2 \varphi) -2\langle\nabla \varphi , \nabla Q_k^2\rangle_\cL -2\varphi \|\nabla Q_k \|^2_\cL  
\\& \,\,  - (4\varphi'' \grad^2 + 6\varphi')Q_k^2 \|\nabla \grad \|^2_\cL + 2 \Ak Q_k^2\,  (\varphi - \varphi' \grad^2).  
\end{align*}
Observe the following
\begin{align*}
-2\langle\nabla \varphi , \nabla Q_k^2\rangle_\cL  = & -2Q_k\langle\nabla \varphi , \nabla Q_k\rangle_\cL  + \varphi^{-1}Q_k^2 \, \|\nabla \varphi \|^2_\cL - \varphi^{-1}\langle\nabla \varphi , \nabla (\varphi Q_k^2)\rangle_\cL \\
 \leq  &\quad  2\varphi  \|\nabla Q_k \|^2_\cL  + \frac{3}{2}\varphi^{-1}Q_k^2 \|\nabla \varphi \|^2_\cL- \varphi^{-1}\langle\nabla \varphi , \nabla (\varphi Q_k^2)\rangle_\cL.
\end{align*}
Hence, the following inequality holds
\begin{align*}\label{eq:SE speed I}
\frac{\bd}{\bd t} (Q_k^2 \varphi) \leq &\,\cL  (Q_k^2 \varphi) - \varphi^{-1}\langle\nabla \varphi , \nabla ( Q_k^2 \varphi)\rangle_\cL \tag{3.1}  \\&- (4\varphi'' \grad^2 + 6\varphi' -6\varphi^{-1} \varphi'^2\grad^2)Q_k^2\|\nabla \grad\|^2_\cL + 2\Ak Q_k^2 \, (\varphi - \varphi' \grad^2). 
\end{align*}
On the other hand, a direct computation gives   the following identities  
\begin{align*}
\varphi - \varphi' \grad^2 = -\varphi^2, \quad \varphi^{-1} \nabla \varphi = 4\cK \varphi \grad^{-3} \nabla \grad , \quad  4\varphi'' \grad^2 + 6\varphi' -6\varphi^{-1} \varphi'^2\grad^2 = \frac{4\mathcal{K}}{(2\cK-\grad^2)^2}\varphi. 
\end{align*}
Setting  $f \coloneq  Q_k^2 \varphi(\grad^2)$ in \eqref{eq:SE speed I} and applying  the identities above and also  $Q_k^2\leq  n \Ak$ (see in  (\ref{eq:Pre Reaction Inequality}) )  gives
\begin{align*}
\frac{\bd}{\bd t } f \leq  \cL \,  f   -4\cK\varphi\grad^{-3} \langle\nabla \grad,\nabla f\rangle_\cL -\frac{4\cK}{(2\cK-\grad^2)^2}\|\nabla \grad\|^2_\cL \, f -2\Ak  \varphi \, f. 
\end{align*}

We will next consider the evolution of $f\psi^2$ for our given cut off function $\phi$. We have seen in  
 (\ref{eq:Pre psi_t}) that  $ \bd_t \psi^2=\cL \psi^2 - 2\|\nabla \psi \|^2_{\cL} $, on the support of $\psi$. Combining this with the evolution of $f$ yields  
\begin{align*}
\frac{\bd}{\bd t }(f\psi^2) \leq  & \cL (f \psi^2) -2\langle\nabla \psi^2, \nabla f\rangle_\cL -2f\|\nabla \psi\|^2_\cL   \\
& -4\cK\varphi\grad^{-3}\psi^2 \langle\nabla \grad,\nabla f\rangle_\cL -\frac{4\cK}{(2\cK-\grad^2)^2}f\psi^2\|\nabla \grad\|^2_\cL -\frac{2}{n}f^2\psi^2
\end{align*}
We compute the following
\begin{align*}
-4\cK\varphi\grad^{-3} \psi^2\langle\nabla \grad,\nabla f\rangle_\cL  &= -4\cK\varphi\grad^{-3} \langle\nabla \grad,\nabla (f \psi^2)\rangle_\cL +8\cK\varphi\grad^{-3} f\psi\langle\nabla \grad,\nabla  \psi\rangle_\cL \\
&\leq -4\cK\varphi\grad^{-3} \langle\nabla \grad,\nabla (f \psi^2)\rangle_\cL +\frac{4\mathcal{K}f\psi^2 \|\nabla \grad \|^2_\cL}{(1-\mathcal{K}\grad^2)^2} + 4\cK\varphi^2  \frac{(1-\mathcal{K}\grad^2)^2}{\grad^{6}}f \|\nabla \psi \|^2_\cL \\
&= -4\cK\varphi\grad^{-3} \langle\nabla \grad,\nabla (f \psi^2)\rangle_\cL +\frac{4\mathcal{K}}{(1-\mathcal{K}\grad^2)^2}f\psi^2\|\nabla \grad\|^2_\cL +  
 4\mathcal{K}\grad^{-2}f\|\nabla \psi\|^2_\cL.
\end{align*}
In addition, we know
\begin{equation*}
-2\langle\nabla \psi^2, \nabla f\rangle_\cL = -4\psi^{-1} \langle\nabla \psi, \nabla (f \psi^2) \rangle_\cL +8 f \|\nabla \psi \|^2_\cL. 
\end{equation*}
Therefore, by using the equations above, we can reduce the evolution equation of $f\psi^2$ to
\begin{align*}
\frac{\bd}{\bd t }(f\psi^2) \leq & \cL (f\psi^2)-\langle 4\psi^{-1} \nabla \psi + 4\cK\varphi\grad^{-3} \nabla \grad,\nabla (f \psi^2)\rangle_\cL  +(6+4\cK\grad^{-2})f\|\nabla \psi\|^2_\cL  -\frac{2}{n}f^2\psi^2. 
\end{align*}
Applying  the inequality $D_i Q_k \leq 1$ shown in (\ref{eq:Pre Uniform Parabolicity}) yields
\begin{align*}
\|\nabla \psi \|^2_\cL =& \|\langle \nabla F,\vec{e}_{n+1}\rangle \|^2_\cL\leq \sum^{n+1}_{m=1}\|\nabla \langle  F,\vec{e}_{m}\rangle \|^2_\cL =\sum^{n+1}_{m=1}\frac{\bd Q_k}{\bd h_{ij}}\nabla_i \langle  F,\vec{e}_m\rangle\nabla_j \langle  F,\vec{e}_m\rangle \\
= & \frac{\bd Q_k}{\bd h_{ij}}\sum^{n+1}_{m=1} \langle  F_i,\vec{e}_m\rangle \langle  F_j,\vec{e}_m\rangle =\frac{\bd Q_k}{\bd h_{ij}} \langle F_i,F_j\rangle= \frac{\bd Q_k}{\bd h_{ij}}g_{ij} = \sum^n_{i=1} D_i Q_k\leq n.
\end{align*}
Hence, by the definition of $\cK$ and $\grad \geq 1$, we have
\begin{equation*}
(6+4\cK\grad^{-2})f \|\nabla \psi \|^2_\cL \leq 10n \cK \grad^{-2} f \leq 10 n \cK f
\end{equation*}
Combining the above inequalities we finally obtain 
\begin{align*}
\frac{\bd}{\bd t }(f\psi^2) \leq & \cL (f\psi^2)-\langle 4\psi^{-1} \nabla \psi + 4\cK\varphi\grad^{-3} \nabla \grad,\nabla (f \psi^2)\rangle_\cL  +10 n \cK f -\frac{2}{n}f^2\psi^2.  
\end{align*}
Since $\psi$ is compactly supported by the conditions (ii), (iii) in Theorem \ref{thm:INT Existence}, $f\psi^2$ attains its maximum $\cM$ on $M^n \times [0,T_0] $ at some $(p_0,t_0)$. If $t_0 >0$, then at $(p_0,t_0)$,  we obtain 
\begin{align*}
\frac{2}{n}\cM f =\frac{2}{n}f^2 \psi^2 \leq 10 n \cK f. 
\end{align*}
Since  $ f\psi^2 =\varphi(\grad^2)Q_k^2\psi^2\leq  (Q_k\psi)^2$, the following holds
\begin{align*}
\cM \leq \max\bigg\{5n^2 \cK,\sup_{p \in M^n}f\psi^2(\cdot,0) \bigg\}\leq \max\bigg\{5 n^2\cK ,\sup_{p \in M}(Q_k\psi)^2(\cdot,0)\bigg\}. 
\end{align*}
Finally, $  (Q_k\psi)^2  \leq \grad^2 Q_k^2\psi^2 \leq 2\cK \varphi(\grad^2) Q_k^2\psi^2 =2\cK f\psi^2$  and $f\psi^2(p,T_0) \leq \cM$ imply 
\begin{equation*}
 (Q_k\psi)^2(p,T_0) \leq  \max\bigg\{ 10n^2 \sup_{Q_M} \grad^4(\cdot,t),2 \sup_{Q_M} \grad^2(\cdot,t) \sup_{p\in M^n} (Q_k\psi)^2(\cdot,0)\bigg\}. 
\end{equation*}
\end{proof}

\section{Curvature estimate}
In this section we will derive a local   upper bound  on  the largest principal curvature ${\lambda}_{\max}$ of
 $M_t$. We will employ  a Pogorelov type computation  with respect to $h_{ii}$ using 
a technique that was introduced by Sheng, Urbas, and Wang in \cite{SUX04Qk} for the elliptic setting. 
The following known formula and will be used in the proof. 

\begin{proposition}\label{prop:CE Euler's formula}
\emph{(The Euler's formula)} Let $\Sigma$ be a smooth hypersurface, and $F:M^n \rightarrow \rno$ be a smooth immersion with $F(M^n)=\Sigma$. Then, for all $p \in M^n$ and $i \in \{1,\cdots,n\}$, the following holds
\begin{align*}
\frac{h_{ii}(p)}{g_{ii}(p)} \leq \lambda_{\max}(p). 
\end{align*}
\end{proposition}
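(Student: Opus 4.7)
The plan is to view both the second fundamental form $h_{ij}$ and the metric $g_{ij}$ as symmetric bilinear forms on the tangent space $T_pM^n$ at the point $p$, and then invoke the Rayleigh quotient characterization of the largest eigenvalue of the generalized eigenvalue problem $\det(h-\lambda g)=0$. Since the Weingarten operator $W:=g^{-1}h$ is self-adjoint with respect to the inner product $g$, and its eigenvalues are precisely the principal curvatures $\lambda_1,\dots,\lambda_n$, one has
$$
\lambda_{\max}(p) \;=\; \sup_{0\neq v\in T_pM^n} \frac{h(v,v)}{g(v,v)}.
$$
Equivalently, as a bilinear inequality on $T_pM^n$, we have $h(v,v)\leq \lambda_{\max}(p)\, g(v,v)$ for every tangent vector $v$.

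To extract the estimate stated in the proposition, I would simply test this bilinear inequality against the coordinate vector field $v=\partial_i$ at $p$, which produces $h(\partial_i,\partial_i)=h_{ii}(p)$ and $g(\partial_i,\partial_i)=g_{ii}(p)$. Since $g_{ii}(p)>0$, dividing yields $h_{ii}(p)/g_{ii}(p)\leq \lambda_{\max}(p)$, as claimed. There is no serious obstacle here: once one identifies the principal curvatures as the generalized eigenvalues of the pencil $(h,g)$, the statement is simply the Rayleigh bound from linear algebra, with equality occurring precisely when $\partial_i|_p$ happens to be a principal direction associated to $\lambda_{\max}$.
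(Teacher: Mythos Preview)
Your proof is correct and is essentially the same argument as the paper's: the paper expands $\partial_i F$ in an orthonormal eigenbasis $\{E_j\}$ of the Weingarten map, writes $h_{ii}=\sum_j a_{ij}^2\lambda_j\le \lambda_{\max}\sum_j a_{ij}^2=\lambda_{\max}\,g_{ii}$, which is exactly the standard proof of the Rayleigh quotient bound you invoke. The only difference is that you cite the Rayleigh characterization as a black box while the paper unpacks it explicitly.
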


\begin{proof}
Assume $\{ E_1(p), \cdots,E_n(p) \}$ is an orthonormal basis of $T\Sigma_{F(p)}$ satisfying $
L(E_j(p)) =\lambda_j(p)E_j(p)$, where $L$ is the Weingarten map. Let $\nabla_i F \coloneq F_i = a_{ij}E_j$. Then, $\displaystyle g_{ii}=\sum_{j=1}^n (a_{ij})^2$. Thus,
\begin{equation*}
h_{ii}=\langle L(F_i),F_i\rangle =\sum_{ j=1}^n(a_{ij})^2\lambda_j \leq \sum_{ j=1}^n(a_{ij})^2\lambda_{\max} = g_{ii}\lambda_{\max}. 
\end{equation*}
\end{proof}

\begin{theorem}[Curvature estimate]\label{thm:CE Local Pinching Estimate}
 Assume $\Omega_0$ and $\Sigma_0$ satisfy the assumptions in \emph{Theorem \ref{thm:INT Existence}}. Let $\Sigma_t$ be a convex complete smooth graph solution of \eqref{eq:INT Qkflow} with the positive $Q_k$ curvature defined on $M^n\times [0,T)$. Then, for any given a constant $M$, we have 
\begin{equation*}
 (\psi^2\lambda_{\max}) (p,t) \leq \exp(2nt\, \sup_{Q_M}Q_k^2)\max \bigg\{ 5M, \sup_{p \in M^n} (\psi^2\lambda_{\max})(p,0) \bigg\}
\end{equation*}
where $Q_M =\{(p,s)\in M^n \times [0,t]:\bar u (p,s)\leq M\}$.
\end{theorem}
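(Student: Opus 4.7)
The plan is a Pogorelov-type maximum principle argument applied to the function $W(p,t) := \psi^2(p,t)\,\lambda_{\max}(p,t)$. Since conditions (ii) and (iii) in Theorem \ref{thm:INT Existence} force $\psi$ to be compactly supported in space at every time, and $\lambda_{\max}$ is continuous, $W$ attains a spacetime maximum on $M^n\times[0,T_0]$ at some $(p_0,t_0)$ for any fixed $T_0\in[0,T)$. If $t_0=0$ the estimate follows immediately from the $\sup_{p\in M^n}(\psi^2\lambda_{\max})(p,0)$ branch of the maximum, so I assume $t_0>0$ and apply the parabolic maximum principle.

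At $(p_0,t_0)$ I would choose an orthonormal tangent frame diagonalizing $h_{ij}$ with $h_{11}(p_0,t_0)=\lambda_{\max}(p_0,t_0)=:\lambda_1$, extend the eigenvector $e_1$ to a smooth vector field $\tilde\eta$ by parallel transport along $g(\cdot,t_0)$-geodesics emanating from $p_0$ (so that $\nabla\tilde\eta(p_0)=0$), and introduce the auxiliary smooth scalar
\[
f(p,t) \;=\; \psi^2(p,t)\cdot\frac{h_{ij}(p,t)\,\tilde\eta^i(p)\tilde\eta^j(p)}{g_{ij}(p,t)\,\tilde\eta^i(p)\tilde\eta^j(p)}.
\]
By Proposition \ref{prop:CE Euler's formula} (Euler's formula), $f\le W$ pointwise with equality at $(p_0,t_0)$, so $(p_0,t_0)$ is also a maximum of $f$; hence $\nabla f=0$ and $(\partial_t-\cL)f\ge 0$ at that point.

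I would then compute $(\partial_t-\cL)f$ at $(p_0,t_0)$ in the chosen frame using the evolution equations \eqref{eq:Pre psi_t}--\eqref{eq:Pre Qk_t}. The key structural cancellation is that the bad reaction term $-2Q_k h_{1\ell}h^{\ell}_1=-2Q_k\lambda_1^2$ arising in \eqref{eq:Pre h_t} is exactly cancelled by the contribution $+2Q_k\lambda_1^2$ coming from the time derivative of the denominator $g_{ij}\tilde\eta^i\tilde\eta^j$ via \eqref{eq:Pre g_t}; equivalently, the calculation is cleanest when phrased in terms of the Weingarten endomorphism $h_{ij}g^{jk}$. The second-order term in \eqref{eq:Pre h_t} is controlled by the Sheng--Urbas--Wang-type inequality of Proposition \ref{prop:Pre 3rd Order}, and the remaining reaction is bounded using $\Ak h_{11}\le nQ_k^2\lambda_1$ from \eqref{eq:Pre Reaction Inequality}. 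The critical-point identity $\nabla h_{11}=-h_{11}\nabla\psi^2/\psi^2$ (a consequence of $\nabla f=0$) eliminates the cross term $-2\langle\nabla\psi^2,\nabla h_{11}\rangle_{\cL}$ appearing in $\cL f$ in favor of $\lambda_1\|\nabla\psi\|_\cL^2$, and then the bound $\|\nabla\psi\|_\cL^2\le n$ already used in the proof of Theorem \ref{thm:SE Speed Estimate}, together with $\psi\le M$, reduces the inequality to the schematic form
\[
0 \;\le\; (\partial_t-\cL)f\bigl|_{(p_0,t_0)} \;\le\; 2n\,Q_k^2\,W \;+\; (\text{constant})\cdot \lambda_1 \;-\; (\text{nonnegative absorption from concavity}).
\]
At any maximum where $W>5M$ the linear-in-$\lambda_1$ error is dominated by $2nQ_k^2 W$ (or absorbed by the nonpositive concavity contribution), leaving the pointwise differential inequality $\partial_t W\le 2n(\sup_{Q_M}Q_k^2)\,W$, and a Gronwall-type comparison for maxima of evolving functions then yields the exponential factor $\exp(2nt\sup_{Q_M}Q_k^2)$ in the stated bound.

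The main obstacle will be the algebraic bookkeeping in the computation of $(\partial_t-\cL)f$: one must carefully track the curvature corrections produced by $\nabla^2 g_{ij}$ and $\nabla^2\tilde\eta$ at $p_0$ (which via Gauss--Codazzi generate $O(|A|^3)$ terms) and absorb them into the negative quadratic form $2\sum_{i<j}\frac{D_iQ_k-D_jQ_k}{\lambda_i-\lambda_j}(\nabla_1 h_{ij})^2$ from Proposition \ref{prop:Pre 3rd Order}. This is precisely the parabolic adaptation of the Sheng--Urbas--Wang elliptic Pogorelov argument of \cite{SUX04Qk}, with the cut-off $\psi^2$ playing the role of the fixed elliptic test function; the sharp constants $2n$ in the exponent and $5$ in front of $M$ fall out of the last absorption step.
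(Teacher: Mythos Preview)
There is a genuine gap at the absorption step. After the critical-point identity $\nabla_i h_{11}/h_{11}=-2\nabla_i\psi/\psi$ and the concavity bound of Proposition~\ref{prop:Pre 3rd Order} are applied, the inequality at the interior maximum actually reads (this is (4.3) in the paper)
\[
n\cA \;\le\; 2\sum_{i=1}^n D_iQ_k\,\frac{|\nabla_i\psi|^2}{\psi^2}
+\sum_{i=1}^n D_iQ_k\,\frac{|\nabla_i h_{11}|^2}{\lambda_1^2}
+\sum_{i\ge 2}\frac{2(D_1Q_k-D_iQ_k)}{\lambda_1(\lambda_1-\lambda_i)}\,|\nabla_i h_{11}|^2,
\]
with $\cA=\sup_{Q_M}Q_k^2$. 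Your plan is to control the first two sums using only $D_iQ_k\le 1$ and $|\nabla_i\psi|\le 1$ (equivalently $\|\nabla\psi\|_{\cL}^2\le n$). Doing so collapses them to $6n/\psi^2$, and the resulting inequality $n\cA\le 6n/\psi^2$ carries \emph{no information about $\lambda_{\max}$ whatsoever}; the ``$(\text{constant})\cdot\lambda_1$'' term in your schematic never materializes, and the nonpositive concavity remainder, while helpful, cannot by itself manufacture a negative power of $\lambda_1$.

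The missing ingredient is the Sheng--Urbas--Wang index splitting: set $I=\{i:D_iQ_k<4D_1Q_k\}$ and $J$ its complement. For $j\in J$ one has $D_1Q_k-D_jQ_k\le -\tfrac34 D_jQ_k$ and $\lambda_1(\lambda_1-\lambda_j)\le\lambda_1^2$, so the negative concavity term exactly absorbs the $J$-indexed contributions. For $i\in I$ one then invokes the crucial bound \eqref{eq:Pre Parabolic Coefficients}, $D_1Q_k\le Q_k^2\lambda_1^{-2}$, which is the \emph{only} source of a negative power of $\lambda_1$ in the entire argument; it yields $6\sum_{i\in I}D_iQ_k|\nabla_i\psi|^2/\psi^2\le 24n\,Q_k^2/(\lambda_1^2\psi^2)$, and rearranging gives $\cA\,\psi^4\lambda_1^2\le 24\,Q_k^2\psi^2\le 24\cA M^2$, i.e.\ $\psi^2\lambda_{\max}\le 5M$. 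Your ``main obstacle'' paragraph focuses on commutator terms from $\nabla^2\tilde\eta$ and $\nabla^2 g_{ij}$, but in the coordinate frame used these are absent; the real obstacle is producing the $\lambda_1^{-2}$ factor via \eqref{eq:Pre Parabolic Coefficients} together with the $I/J$ dichotomy, neither of which appears in your outline. (A minor secondary point: building $\exp(-2nt\cA)$ into the test function from the outset, as the paper does, is cleaner than extracting the exponential factor a posteriori via a Gronwall argument on a moving maximum.)
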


\begin{proof}
Given  $T_0 \in [0,T)$, we define $Q_M = \{(p,s)\in M^n\times [0,T_0]:\bar u(p,s) \leq M\}$ and we  will prove that 
\begin{equation*}
 (\psi^2\lambda_{\max}) (p,T_0) \leq \exp(2nT_0\, \sup_{Q_M}Q_k^2)\max \bigg\{ 5M, \sup_{p \in M^n} (\psi^2\lambda_{\max})(p,0) \bigg\}. 
\end{equation*}
We set  $\cA=\sup_{Q_M}Q_k^2$. By the conditions (ii), (iii) in Theorem \eqref{eq:INT Qkflow}, $\exp(-2nt\cA )\psi^2\lambda_{\max}$ attains its maximum  in $M^n \times [0,T_0]$ at some point $(p_0,t_0)$.  If $t_0 =0$, we obtain the desired result. So, we may assume $t_0 >0$. First we choose a chart $(U,\varphi)$ with $p_0 \in \varphi(U) \subset M^n$ such that the covariant derivatives $\{\nabla_iF(p_0,t_0):i=1,\cdot,n\}$ form an orthonormal basis of $(T\Sigma_{t_0})$ satisfying
\begin{align*}
g_{ij}(p_0,t_0)=\delta_{ij}, && h_{ij}(p_0,t_0)=\delta_{ij}\lambda_i(p_0,t_0), && \lambda_1(p_0,t_0) =\lambda_{\max}(p_0,t_0). 
\end{align*}
Then,  $h_{11}(p_0,t_0) = \lambda_{\max}(p_0,t_0) $, $g_{11}(p_0,t_0) = 1$ hold. Next, we define the  function $w: U \times [0,T_0] \rightarrow \mathbb{R}$ by
\begin{align*}
w \coloneq \exp(-2nt \, \cA )\psi^2 \frac{ h_{11}}{g_{11}}. 
\end{align*}
Notice that if $t \neq t_0$, the covariant derivatives $\{\nabla_i F(p_0,t)\}_{i =1, \cdots, n}$ may fail to form an orthonormal basis of $(T\Sigma)_{F(p_0,t)}$. However, Proposition \ref{prop:CE Euler's formula} applies  for every chart and immersion. So, for all points 
$(p,t)\in \varphi(U)\times [0,T_0]$, we have 
\begin{equation*}
w(p,t) \leq \exp(-2nt\cA )\psi^2 \lambda_{\max} (p,t) \leq \exp(-2nt_0\cA )\psi^2 \lambda_{\max} (p_0,t_0) = w(p_0,t_0) 
\end{equation*}
implying  that  $w$ attains its maximum at $(p_0,t_0)$.  Since $\nabla g_{11}=0$, the following holds on the support of $\psi$
\begin{align*}\label{eq:CE Pinching Gradeint}
\frac{\nabla_i w}{w} =  2\frac{\nabla_i \psi}{\psi}+ \frac{\nabla_i h_{11}}{h_{11}}. \tag{4.1}
\end{align*}
Differentiating  the equation above we obtain 
\begin{align*}
\frac{\nabla_i \nabla_j w}{w}-\frac{\nabla_i  w \nabla_j w}{w^2} = 2\frac{\nabla_i\nabla_j \psi}{\psi}-  2\frac{\nabla_i \psi \nabla_j \psi}{\psi^2}+\frac{\nabla_i\nabla_j h_{11}}{h_{11}}-\frac{\nabla_i h_{11}\nabla_j h_{11}}{(h_{11})^2}. 
\end{align*}
Multipling  by ${\displaystyle \frac{\bd Q_k}{\bd h_{ij}}}$ and summing over   all $i,j$ yields 
\begin{align*}
\frac{\cL\, w}{w}-\frac{\|\nabla  w \|^2_{\cL}}{w^2} =  2\frac{\cL\, \psi}{\psi}- 2\frac{\|\nabla \psi \|^2_{\cL}}{\psi^2}+\frac{\cL\, h_{11}}{h_{11}}-\frac{\|\nabla  h_{11}\|^2_{\cL}}{(h_{11})^2}. 
\end{align*}
On the other hand, on the support of $\psi$, the following holds
\begin{align*}
\frac{\bd_t w}{w} = -2n\cA + 2\frac{\bd_t \psi}{\psi}+ \frac{\bd_t h_{11}}{h_{11}}-\frac{\bd_t g_{11}}{g_{11}}. 
\end{align*}
Recall that $\bd_t \psi  = \cL \psi$  by , by  \eqref{eq:Pre psi_t}, $ \bd_t g_{11} = -2Q_k h_{11}$ by   \eqref{eq:Pre g_t}, and 
also that 
\begin{align*}
\bd_t h_{11} =\cL\, h_{11} +\frac{\bd^2 Q_k}{\bd h_{ij} \bd h_{ml}}\nabla_1 h_{ij}\nabla_1h_{ml}-2Q_k h_{1i}h^i_1 +\Ak h_{11} 
\end{align*}
by  in \eqref{eq:Pre h_t}. 
Combining the equations above yields
\begin{align*}\label{eq:CE Pinching 1st}
\frac{\cL\, w}{w}-\frac{\|\nabla  w \|^2_{\cL}}{w^2} -\frac{\bd_t w}{w} =  & - 2\frac{\|\nabla \psi \|^2_{\cL}}{\psi^2}-\frac{\|\nabla  h_{11}\|^2_{\cL}}{(h_{11})^2}-\frac{1}{h_{11}}\frac{\bd^2 Q_k}{\bd h_{ij} \bd h_{ml}}\nabla_1 h_{ij}\nabla_1h_{ml} \tag{4.2} \\
& + 2n\cA+\frac{2Q_k h_{1i}h^i_1}{h_{11}} - \Ak - 2Q_k \frac{h_{11}}{g_{11}}. 
\end{align*}
Observe next that   $\Ak \leq n Q_k^2 \leq n\cA$ by  \eqref{eq:Pre Reaction Inequality}. Also, at   $(p_0,t_0)$, $\displaystyle \frac{2Q_k h_{1i}h^i_1}{h_{11}} - 2Q_k \frac{h_{11}}{g_{11}}=0$ holds. Moreover,  Proposition \ref{prop:Pre 3rd Order} implies that 
\begin{align*}
-\frac{1}{h_{11}}\frac{\bd^2 Q_k}{\bd h_{ij} \bd h_{ml}}\nabla_1 h_{ij}\nabla_1h_{ml} & \geq 2\sum^n_{i=2} -\frac{D_1Q_k-D_iQ_k}{\lambda_1(\lambda_1-\lambda_i)} |\nabla_1 h_{1i}|^2 
\end{align*}
holds at the point  $(p_0,t_0)$. Furthermore, by the definition of the operator $\cL$, at the point $(p_0,t_0)$ we have 
\begin{align*}
\frac{\|\nabla w \|^2_{\cL}}{w^2} \geq 0, \qquad  \frac{\|\nabla \psi \|^2_{\cL}}{\psi^2} = \sum^n_{i=1}\frac{\bd Q_k}{\bd \lambda_i} \frac{|\nabla_i \psi|^2}{\psi^2}, \qquad  \frac{\|\nabla  h_{11}\|^2_{\cL}}{(h_{11})^2}=\sum^n_{i=1}\frac{\bd Q_k}{\bd \lambda_i} \frac{|\nabla_i h_{11}|^2}{\lambda_1^2}. 
\end{align*}
We conclude from (\ref{eq:CE Pinching 1st}) that at  the maximum point  $(p_0,t_0)$ of $w$,  
the following holds 
\begin{align*}\label{eq:CE Pinching 2nd}
 n\cA \leq  2\sum^n_{i=1}\frac{\bd Q_k}{\bd \lambda_i} \frac{|\nabla_i \psi|^2}{\psi^2}+\sum^n_{i=1}\frac{\bd Q_k}{\bd \lambda_i} \frac{|\nabla_i h_{11}|^2}{\lambda_1^2}+\sum^n_{i=2}  \frac{2(D_1Q_k-D_iQ_k)}{\lambda_1(\lambda_1-\lambda_i)} |\nabla_i h_{11}|^2\tag{4.3}
\end{align*}

Next, we define the following sets
\begin{align*}
I=\{i\in (1,\cdots,n) :\frac{\bd Q_k}{\bd \lambda_i} < 4\frac{\bd Q_k}{\bd \lambda_1} \} \quad \mbox{and} \quad 
J=\{j\in (1,\cdots,n) : \frac{\bd Q_k}{\bd \lambda_j}\geq 4\frac{\bd Q_k}{\bd \lambda_1} \}. 
\end{align*}
Since $w$ attains its maximum at $(p_0,t_0)$, $\nabla w(p_0,t_0) =0$ holds. Thus,  
by  \eqref{eq:CE Pinching Gradeint}  
\begin{align*}
2\sum^n_{i=1} \frac{\bd Q_k}{\bd \lambda_i}\frac{|\nabla_i \psi|^2}{\psi^2} &= 2\sum_{i \in I} \frac{\bd Q_k}{\bd \lambda_i}\frac{|\nabla_i \psi|^2}{\psi^2} + 2\sum_{j \in J} \frac{\bd Q_k}{\bd \lambda_j}\frac{|\nabla_j \psi|^2}{\psi^2} = 2\sum_{i \in I} \frac{\bd Q_k}{\bd \lambda_i}\frac{|\nabla_i \psi|^2}{\psi^2} + \frac{1}{2}\sum_{j \in J} \frac{\bd Q_k}{\bd \lambda_j}\frac{|\nabla_j h_{11}|^2}{h_{11}^2} 
\end{align*}
and
\begin{align*} 
\sum^n_{i=1} \frac{\bd Q_k}{\bd \lambda_i}\frac{|\nabla_i h_{11}|^2}{(h_{11})^2} &= \sum_{i \in I} \frac{\bd Q_k}{\bd \lambda_i}\frac{|\nabla_i h_{11}|^2}{(h_{11})^2} + \sum_{j \in J} \frac{\bd Q_k}{\bd \lambda_j}\frac{|\nabla_j h_{11}|^2}{(h_{11})^2} = 4\sum_{i \in I} \frac{\bd Q_k}{\bd \lambda_i}\frac{|\nabla_i \psi|^2}{\psi^2} + \sum_{j \in J} \frac{\bd Q_k}{\bd \lambda_j}\frac{|\nabla_j h_{11}|^2}{h_{11}^2}
\end{align*}
and by  adding  the two equations above we obtain
\begin{equation*}
2\sum^n_{i=1} \frac{\bd Q_k}{\bd \lambda_i}\frac{|\nabla_i \psi|^2}{\psi^2}+\sum^n_{i=1}\frac{\bd Q_k}{\bd \lambda_i}\frac{|\nabla_i h_{11}|^2}{\lambda_1^2}=6\sum_{i \in I} \frac{\bd Q_k}{\bd \lambda_i}\frac{|\nabla_i \psi|^2}{\psi^2}+\frac{3}{2}\sum_{j \in J} \frac{\bd Q_k}{\bd \lambda_j}\frac{|\nabla_j h_{11}|^2}{\lambda_1^2}. 
\end{equation*} 
However, we know $1 \not\in J$ and $\lambda_1 \neq \lambda_j$. Hence, for $j \in J $, the definition of $J$ leads to
\begin{equation*}
\frac{\bd Q_k}{\bd \lambda_1} -\frac{\bd Q_k}{\bd \lambda_j} \leq - \frac{3}{4}\frac{\bd Q_k}{\bd \lambda_j}. 
\end{equation*}
On the other hand  $\lambda_1 (\lambda_1 -\lambda_j) \leq  (\lambda_1)^2  $ holds. Hence,  for $j \in J$, we obtain
\begin{equation*}
 \frac{2(D_1Q_k-D_jQ_k)}{\lambda_1(\lambda_1-\lambda_j)}|\nabla_j h_{11}|^2  \leq -\frac{3}{2} \frac{\bd Q_k}{\bd \lambda_j}\frac{|\nabla_j h_{11}|^2}{\lambda_1^2}.  
\end{equation*}
Thus, (\ref{eq:CE Pinching 2nd}) can be reduced to
\begin{align*}
 n\cA  \leq 6\sum_{i \in I} \frac{\bd Q_k}{\bd \lambda_i}\frac{|\nabla_i \psi|^2}{\psi^2}. 
\end{align*}
Applying $|\nabla_i \psi|^2 =|\langle F_i ,\vec{e}_{n+1} \rangle|^2 \leq | F_i|^2  = g_{ii} =  1$ and  the definition of $I$, we obtain
\begin{align*}
 n\cA  \leq 6\sum_{i \in I} \frac{\bd Q_k}{\bd \lambda_i}\frac{|\nabla_i \psi|^2}{\psi^2}  \leq 24\sum_{i \in I} \frac{\bd Q_k}{\bd \lambda_1}\frac{1}{\psi^2} \leq 24 n\,  \frac{\bd Q_k}{\bd \lambda_1} \psi^{-2}. 
\end{align*}
Using that  $D_1 \lambda \leq  Q_k^2\lambda_1^{-2}= Q_k^2\lambda_{\max}^{-2}$ by \eqref{eq:Pre Parabolic Coefficients}, 
$\psi \leq M$, and $Q_k^2 \leq \cA$, in the inequality above yields  
\begin{align*}
\cA \,  \psi^4  \lambda_{\max}^2\leq  24  \, Q_k^2 \psi^2 \leq 24 \,  \cA M^2
\end{align*}
implying that  $\psi^2 \lambda_{\max} (p_0,t_0)  \leq 5M$ holds. In conclusion,
\begin{align*}
w(p,t)\leq w(p_0,t_0) \coloneq \exp (-2n t_0\,\cA)\psi^2\lambda_{\max} (p_0,t_0) \leq \psi^2 \lambda_{\max} (p_0,t_0)  \leq 5M
\end{align*}
which finishes the proof of our estimate. 
\end{proof}

\section{Curvature derivative estimates}\label{sec-higher}
In this section, we will derive  local curvature derivative estimates of any order, by 
 adopting   Shi's local derivative estimates  in the  fully-nonlinear setting. 
 
\begin{proposition}\label{prop:DE Basic estimate}
Let $\Sigma_t$ be a convex smooth solution of \emph{\eqref{eq:INT Qkflow}} defined for $ t \in [0,T)$ with the positive $Q_k$ curvature. Given a set $Q \subset \rno \times [0,T)$ and for each order  $m \geq 0$, there exist  positive constants $\displaystyle\gamma_m \coloneq \gamma_m(k,m,n,\inf_{Q}Q_k,\sup_Q | A|^2)$ and $\displaystyle C_m\eqqcolon C_m(k,m,n,\inf_{Q}Q_k,\sup_Q | A|^2,\max_{ j < m}\sup_Q |\nabla^j A|^2)$ for which 
\begin{align*}\label{eq:DE Basic estimate}
\bd_t |\nabla^m A|^2 \leq \cL\, |\nabla^m A|^2 -\gamma_m  |\nabla^{m+1} A |^2 +C_m\big(|\nabla^m A|^4 +1\big)\tag{5.1}
\end{align*}
holds on $Q$. 
\end{proposition}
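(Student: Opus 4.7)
I would prove \eqref{eq:DE Basic estimate} by induction on $m$. For the base case $m=0$, starting from \eqref{eq:Pre h_t} together with \eqref{eq:Pre 1/g_t}, a direct computation gives
\[
\bd_t|A|^2 = \cL|A|^2 - 2\|\nabla h\|_{\cL}^2 + 2h^{ij}\frac{\bd^2 Q_k}{\bd h_{pq}\bd h_{rs}}\nabla_ih_{pq}\nabla_jh_{rs} + 2\Ak|A|^2,
\]
after the $-4Q_k\operatorname{tr}(h^3)$ term coming from $h^{ij}\bd_t h_{ij}$ cancels against the $+4Q_k\operatorname{tr}(h^3)$ produced by $\bd_t g^{ij}=2Q_kh^{ij}$. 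The Hessian-of-$Q_k$ piece is controlled by the concavity bound from Proposition \ref{prop:Pre 3rd Order}, producing a quantity bounded by $C\,\|\nabla A\|^2$ with $C=C(\inf Q_k,\sup|A|^2)$, while the reaction term is handled via \eqref{eq:Pre Reaction Inequality}. Uniform ellipticity \eqref{eq:Pre Uniform Parabolicity} then converts $-2\|\nabla h\|_{\cL}^2$ into $-2\gamma_0|\nabla A|^2$ for a suitable $\gamma_0=\gamma_0(n,\inf Q_k,\sup|A|^2)$, which also absorbs the gradient contribution of the Hessian term after halving $\gamma_0$.

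For the inductive step, I assume the statement for orders less than $m$, and local $L^\infty$ bounds on $|\nabla^j A|$ for $j<m$. I would compute $\bd_t|\nabla^m A|^2$, using \eqref{eq:Pre 1/g_t} for the metric contributions (each bounded by induction) and analyzing $\bd_t\nabla^m h_{ij}$. Writing $\bd_t\nabla = \nabla\bd_t + [\bd_t,\nabla]$ and iterating, together with $\bd_t\Gamma^r_{pq}\sim\nabla(Q_kh)$ coming from \eqref{eq:Pre g_t}, shows that $[\bd_t,\nabla^m]h_{ij}$ is a contracted polynomial in derivatives of $A$ of total order at most $m$. After substituting \eqref{eq:Pre h_t}, I would expand $\nabla^m\cL h_{ij} = \cL\nabla^m h_{ij} + [\nabla^m,\cL]h_{ij}$, with the second commutator arising from (a) Gauss-equation curvature terms, bounded by $|A|^2$ times lower-order derivatives, and (b) derivatives of the coefficient $\bd Q_k/\bd h$, which is smooth in $h$ on $\{Q_k>0\}$. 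A Leibniz-rule bookkeeping then gives
\[
\bd_t\nabla^m h_{ij} = \cL\nabla^m h_{ij} + E^{(m)}_{ij},
\]
where $E^{(m)}$ is a sum of contracted products of $\nabla^{j_\ell}A$ of total derivative order at most $m+1$, with the combinatorial property that whenever a factor of order $m+1$ occurs, the remaining factors have total order at most $1$.

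Using the standard identity $\cL|\nabla^m A|^2 = 2\nabla^m h^{ij}\cL\nabla^m h_{ij} + 2\|\nabla^{m+1}A\|_{\cL}^2$, I obtain
\[
\bd_t|\nabla^m A|^2 \le \cL|\nabla^m A|^2 - 2\|\nabla^{m+1}A\|_{\cL}^2 + 2\nabla^m h^{ij}E^{(m)}_{ij}.
\]
Each term in the last contraction has the form $\nabla^m A \ast \nabla^{j_1}A \ast \cdots \ast \nabla^{j_s}A$ with $\sum j_i \le m+1$. When a factor $\nabla^{m+1}A$ appears, the combinatorial structure pairs it with at most one $\nabla A$ together with bounded $A$-factors, so Young's inequality yields $\epsilon|\nabla^{m+1}A|^2 + C_\epsilon|\nabla^m A|^2$ (or, if $m=1$, the $\nabla A$ factor equals $\nabla^m A$ and the bound reads $\epsilon|\nabla^{m+1}A|^2 + C_\epsilon|\nabla^m A|^4$). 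Choosing $\epsilon$ small and using \eqref{eq:Pre Uniform Parabolicity} absorbs the gradient piece into the good term, producing $\gamma_m=\gamma_m(n,\inf Q_k,\sup|A|^2)$. All remaining error terms contract to products of $|\nabla^m A|$ with factors bounded by induction, and $x^j \le x^4 + 1$ for $x\ge 0,\ j\le 4$ dominates them by $C_m(|\nabla^m A|^4+1)$. The principal difficulty is the fully nonlinear feature of \eqref{eq:Pre h_t}, specifically the Hessian term $\frac{\bd^2 Q_k}{\bd h\,\bd h}\nabla h\,\nabla h$, which generates the $\nabla^{m+1}A$ factors in $E^{(m)}$; the crucial structural inputs are the concavity of $Q_k$ and uniform ellipticity from $Q_k>0$, which together enforce the pairing constraint that makes Young's inequality close the estimate.
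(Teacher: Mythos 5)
Your argument follows essentially the same route as the paper: an explicit computation of $\bd_t|\nabla^m A|^2$ via commutator and Leibniz bookkeeping applied to \eqref{eq:Pre h_t}, with the $\nabla^{m+1}A$ factors (which arise precisely from the Hessian term $\frac{\bd^2 Q_k}{\bd h\,\bd h}\nabla h\nabla h$ and from differentiating the coefficients $\bd Q_k/\bd h_{ij}$) absorbed into the good term $-\|\nabla^{m+1}A\|_{\cL}^2$ by Young's inequality and the uniform ellipticity \eqref{eq:Pre Uniform Parabolicity}, and all remaining contractions dominated by $C_m(|\nabla^m A|^4+1)$ using the lower-order sup bounds. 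The paper organizes this as separate cases $m=0$, $m=1$, $m\ge 2$ rather than a formal induction, and for $m=0$ simply invokes smoothness of $Q_k$ together with the lower bound on $S_{k-1}$ rather than the concavity inequality, but these are presentational differences only.
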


\begin{proof} 
Since the derivatives of $Q_k(\lambda)$ are controlled by $|A|^2$ and $(S_{k-1})^{-1} \geq C(n,k) (Q_k)^{-(k-1)}$, 
the estimate
$$(\bd_t -\cL) |A|^2 \leq  -2\gamma_0  |\nabla A |^2 +C_{0}$$
readily follows   by  \eqref{eq:Pre h_t}. 

Let us ow show the estimate for $m=1$. 
For convenience, we define $a_{ij}(A)$, $b_{ij}(A,\nabla A)$, and $c_{ij}(A)$ by
\begin{align*}
a_{ij}=\frac{\bd Q_k}{\bd h_{ij}}, \qquad  b_{ij}=\frac{\bd^2 Q_k}{\bd h_{pq}\bd h_{rs}}\nabla_i h_{pq}\nabla_j h_{rs}, \qquad
 c_{ij}=-2Q_k h_{il}h^l_j+\Ak h_{ij}
\end{align*}
Then, $\bd_t \nabla_k h_{ij} = \nabla_k \big( a_{pq}\nabla_p \nabla_q h_{ij} \big)+\nabla_k b_{ij} +\nabla_k c_{ij}$ gives
\begin{align*}
\bd_t |\nabla A|^2=&  \bd_t \big( g^{pi}g^{ql}g^{rk}\nabla_r h_{pq} \nabla_k h_{ij} \big)=2 (\bd_t \nabla_k h_{ij})\nabla^kh^{ij}+6Q_k h^{kr}\nabla_r h^{ij} \nabla_k h_{ij}\\
\leq &\, 2a_{pq} \nabla_k \nabla_p \nabla_q h_{ij}\nabla^kh^{ij} +2 \nabla_k  a_{pq} \nabla_p \nabla_q h_{ij}\nabla^k h^{ij}+C|\nabla A|^2+C\big(|\nabla^k h^{ij}\nabla_k b_{ij}|+|\nabla^k h^{ij} \nabla_k c_{ij}|\big)\\
\leq &\, 2a_{pq} \nabla_k \nabla_p \nabla_q h_{ij}\nabla^kh^{ij} +C(n,k,\sup |A|^2,\inf Q_k)\big(|\nabla^2 A| |\nabla A|^2 +|\nabla A|^4+|\nabla A|^2\big). 
\end{align*}
Commutating derivatives yields $|\nabla_k \nabla_p \nabla_q h_{ij}-\nabla_p \nabla_q \nabla_k h_{ij}|\leq C|\nabla A| $, which leads to
\begin{align*}
\bd_t |\nabla A|^2 \leq &\,  2\big(\cL\, \nabla_k h_{ij})\nabla^k h^{ij} +C\big(|\nabla^2 A| |\nabla A|^2 +|\nabla A|^4+|\nabla A|^2\big) \\
\leq &\,  \cL\, |\nabla A|^2 - 2a_{mn}\nabla_m \nabla_k h_{ij} \nabla_n \nabla^k h^{ij} +C\big(|\nabla^2 A| |\nabla A|^2 +|\nabla A|^4+|\nabla A|^2\big) \\
\leq & \, \cL\, |\nabla A|^2 - 2\gamma_1(n,k,\sup |A|^2,\inf Q_k) |\nabla^2 A|^2+C\big(|\nabla^2 A| |\nabla A|^2 +|\nabla A|^4+|\nabla A|^2\big) \\
\leq & \, \cL\, |\nabla A|^2 - \gamma_1|\nabla^2 A|^2+C\big(|\nabla A|^4+|\nabla A|^2\big).
\end{align*}
Thus, $|\nabla A|^4+|\nabla A|^2 \leq C\big(|\nabla A|^4+1\big)$ implies the desired result.

Let us consider the other cases $m \geq 2$. For an index set $\alpha\eqqcolon(\alpha_1,\cdots,\alpha_m)$ of degree $m$ and a tensor $T$, we denote by $\nabla_\alpha T$ and $\nabla^\alpha T$ the $m$-th order derivatives $\nabla_{\alpha_1} \cdots\nabla_{\alpha_m} T$ and $\nabla^{\alpha_1} \cdots\nabla^{\alpha_m} T$, respectively. Then,
\begin{align*}
\bd_t |\nabla^m A|^2=  \bd_t \Big( \sum_{|\alpha|=m} \big( \nabla^\alpha h^{ij} \nabla_\alpha h_{ij} \big) \Big) \leq 2\sum_{|\alpha|=m} \nabla^\alpha h^{ij}\,(\bd_t \nabla_\alpha h_{ij})+C |\nabla^m A|^2. 
\end{align*}
By using $|\alpha|=m$, on an orthonormal frame, we can observe the following
\begin{align*}
\big| \nabla_\alpha  c_{ij}(A) \big| \leq & \, C \big(|\nabla^m A|+|\nabla^{m-1}A||\nabla A|+|\nabla^{m-2}A|(|\nabla^2 A|+|\nabla A|^2)+\cdots+|\nabla A|^m \big) \\
\leq &\, C\big(|\nabla^m A|+\max_{j< m}\sup |\nabla^{j}A|^{m}+1) \leq C(|\nabla^m A|+1\big).
\end{align*}
In addition, since $\nabla_\alpha b_{ij}(A ,\nabla A)$ includes $(m+2)$ derivatives, we have
\begin{align*}
\big| \nabla_\alpha b_{ij}(A ,\nabla A) \big| \leq C \big(|\nabla^{m+1} A||\nabla A|+|\nabla^m A||\nabla^2 A|+|\nabla A|^m|\nabla A|^2+\max_{j< m}\sup |\nabla^{j}A|^{m+2}+1\big). 
\end{align*}
Notice that $|\nabla^2 A|$ can be $|\nabla^m A|$ in the case $m=2$. Thus, 
\begin{align*}
\big| \nabla_\alpha b_{ij}(A ,\nabla A) \big| \leq C \big(|\nabla^{m+1} A|+|\nabla^m A|^2+1\big).
\end{align*}
Moreover, $\nabla_\alpha (a_{pq}\nabla_p\nabla_q h_{ij})-a_{pq}\nabla_\alpha \nabla_p \nabla_q h_{ij} $ can be estimated as $\big| \nabla_\alpha b_{ij}(A ,\nabla A) \big|$. Thus, 
\begin{align*}
|\nabla_\alpha (a_{pq}\nabla_p\nabla_q h_{ij})-a_{pq}\nabla_\alpha \nabla_p \nabla_q h_{ij}|
 \leq  C \big(|\nabla^{m+1} A|+|\nabla^m A|^2+1\big). 
\end{align*}
By commutating derivatives, we have  
\begin{align*}
|\nabla_\alpha\nabla_p \nabla_q h_{ij}-\nabla_p\nabla_q \nabla_\alpha h_{ij}|\leq   C\big(|\nabla^m A|+\max_{j< m}\sup_Q |\nabla^{j}A|^{m}+1\big) \leq C\big(|\nabla^m A|+1\big). 
\end{align*}
Therefore,
\begin{align*}
\bd_t |\nabla^m A|^2 \leq &\,  2\sum_{|\alpha|=m} \nabla^\alpha h^{ij}\cL\, \nabla_\alpha h_{ij} +C|\nabla^m A|\big(|\nabla^{m+1} A| +|\nabla^m A|^2+|\nabla^m A|+1\big) \\
\leq & \, \cL\, |\nabla^m A|^2 - 2\gamma_m |\nabla^{m+1} A|^2+C\big(|\nabla^{m+1} A| |\nabla^m A| +|\nabla^m A|^3+|\nabla^m A|^2+|\nabla^m A|\big) \\
\leq & \, \cL \,|\nabla^m A|^2 - \gamma_m|\nabla^{m+1} A|^2+C_m\big(|\nabla^m A|^4+1\big). 
\end{align*}
\end{proof}

\begin{theorem}[Curvature derivative estimates] \label{thm:DE Higher Order Estimates}
Assume that $\Omega_0$ and $\Sigma_0$ satisfy the assumptions in  \emph{Theorem \ref{thm:INT Existence}}  and let $\Sigma_t$ be a complete convex smooth graph solution of \emph{(\ref{eq:INT Qkflow})} with the positive $Q_k$ curvature defined on $M^n \times [0,T)$. Given a constant $M>0$ and an order $m \in \mathbb{N}$, the following holds
\begin{equation*}
 \psi^2  | \nabla^m A |^2 (p,t) \leq C\big(k, m, n, M,\;   \inf_{Q_M} Q_k, \; \sup_{Q_M} |A|^2,  \; \max_{ j < m}\big(\sup_{Q_M} |\nabla^j A|^2),\; \sup_{p \in M^n} \psi^2|\nabla^m A|^2(p,0)\big)
\end{equation*}
where $Q_M \coloneqq \{ (p,s) \in M^n \times [0,t]:\bar u(p,s) \leq M\}.$
\end{theorem}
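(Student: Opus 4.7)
The plan is to adapt Shi's local derivative estimate strategy to our concave fully nonlinear parabolic setting, proceeding by induction on $m$. The base case $m=0$ is trivial: since $\psi\le M$, $\psi^2|A|^2\le M^2\sup_{Q_M}|A|^2$. For the inductive step, the hypothesis of the theorem grants us $\sup_{Q_M}|\nabla^j A|^2<\infty$ for all $j<m$, so these lower order norms may be treated as fixed constants throughout the argument.

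The two evolution inequalities from Proposition~\ref{prop:DE Basic estimate} that we will combine are
\begin{align*}
(\bd_t-\cL)|\nabla^m A|^2 &\le -\gamma_m|\nabla^{m+1}A|^2 + C_m\bigl(|\nabla^m A|^4+1\bigr),\\
(\bd_t-\cL)|\nabla^{m-1}A|^2 &\le -\gamma_{m-1}|\nabla^m A|^2 + C,
\end{align*}
the second one using the inductive bound on $|\nabla^{m-1}A|^2$. The reactive term $C_m|\nabla^m A|^4$ is quadratic in the very quantity we wish to bound; the second inequality is the only source of a compensating negative term of the correct shape. Following Shi, I would introduce the coupled auxiliary function
\[
F \coloneq \frac{\psi^{2}|\nabla^m A|^2}{B-|\nabla^{m-1}A|^2}\qquad \text{with } B>\sup_{Q_M}|\nabla^{m-1}A|^2,
\]
so that $U\coloneq B-|\nabla^{m-1}A|^2$ stays in a fixed compact subinterval of $(0,B]$; an equivalent exponential variant $F=\psi^{2}e^{B|\nabla^{m-1}A|^2}|\nabla^m A|^2$ would serve just as well.

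The argument proper runs in the spirit of Theorems~\ref{thm:SE Speed Estimate} and~\ref{thm:CE Local Pinching Estimate}. Because $\psi$ is compactly supported in $M^n\times[0,T_0]$ by conditions~(ii)--(iii) of Theorem~\ref{thm:INT Existence}, $F$ attains its maximum at some $(p_0,t_0)$; if $t_0=0$, the bound reduces to the initial data contribution, and otherwise $(\bd_t-\cL)F\ge 0$ at $(p_0,t_0)$. Writing $V\coloneq\psi^{2}|\nabla^m A|^2$, the identity $\nabla F=0$ forces $\nabla V=V\nabla U/U$, and a direct computation shows that the quotient-rule cross terms $2U^{-2}\langle\nabla V,\nabla U\rangle_\cL$ and $-2VU^{-3}\|\nabla U\|_\cL^2$ cancel exactly at the critical point. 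What remains is the clean inequality
\[
0\le U^{-1}(\bd_t-\cL)V - VU^{-2}(\bd_t-\cL)U.
\]
The factor $-VU^{-2}(\bd_t-\cL)U$ yields a negative quartic $-\gamma_{m-1}\psi^{2}|\nabla^m A|^4/U^2$, which -- for a suitable choice of $B$ relative to $\gamma_{m-1}/C_m$ -- dominates the bad term $+C_m\psi^{2}|\nabla^m A|^4/U$ coming from $U^{-1}(\bd_t-\cL)V$. The cutoff cross terms hidden in $(\bd_t-\cL)V$ are controlled via $\|\nabla\psi\|_\cL^2\le n$ (already computed in the proof of Theorem~\ref{thm:SE Speed Estimate}) together with Young's inequality, and any residual $|\nabla^{m+1}A|$ factors are absorbed into the good term $-\gamma_m|\nabla^{m+1}A|^2$ via $\|\nabla|\nabla^m A|^2\|_\cL^2\le C|\nabla^m A|^2|\nabla^{m+1}A|^2$.

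The main obstacle is precisely the quadratic -- rather than linear -- character of the reactive term: in Shi's original Ricci-flow setting the analogous bad term is linear in $|\nabla^m Rm|^2$, whereas here the fully nonlinear equation produces $|\nabla^m A|^4$, and a plain linear combination $\psi^{2}|\nabla^m A|^2+A|\nabla^{m-1}A|^2$ only generates a linear negative term $-A\gamma_{m-1}|\nabla^m A|^2$ and does not close the estimate. The quotient (or exponential) coupling is essential because, after differentiation, it multiplies $(\bd_t-\cL)|\nabla^{m-1}A|^2$ by the factor $|\nabla^m A|^2$, so the resulting negative contribution matches the order of the offending term. The delicate step is the choice of the parameter $B$: its admissible range depends on $\sup_{Q_M}|\nabla^{m-1}A|^2$ and on the ratio $\gamma_{m-1}/C_m$ supplied by Proposition~\ref{prop:DE Basic estimate}, and that dependence is exactly what propagates into the list of constants declared in the conclusion of the theorem.
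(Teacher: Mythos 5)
Your overall architecture (localize with $\psi$, couple $|\nabla^m A|^2$ to $|\nabla^{m-1}A|^2$, run a maximum-principle argument using Proposition \ref{prop:DE Basic estimate}) matches the paper's, and your computation that the quotient cross terms cancel at the critical point is correct. But the coupling you chose does not close the estimate, and the obstruction is exactly the one you flag at the end without resolving. With $F=V/U$, $V=\psi^2|\nabla^m A|^2$, $U=B-|\nabla^{m-1}A|^2$, the competition at the maximum is between the bad term $U^{-1}\cdot C_m\psi^2|\nabla^m A|^4$ and the good term $-U^{-2}\cdot\gamma_{m-1}\psi^2|\nabla^m A|^4$; the latter dominates only if $U\leq \gamma_{m-1}/C_m$ at the maximum point. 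Since $U$ can take any value up to $B$, you must impose $B\leq\gamma_{m-1}/C_m$, while positivity of $U$ forces $B>\sup_{Q_M}|\nabla^{m-1}A|^2$. The admissible range for $B$ is therefore empty whenever $\sup_{Q_M}|\nabla^{m-1}A|^2\geq\gamma_{m-1}/C_m$, which is the generic situation: $\gamma_{m-1}$ and $C_m$ are fixed by the geometry and there is no smallness assumption on $\sup_{Q_M}|\nabla^{m-1}A|^2$. Rescaling $|\nabla^{m-1}A|^2$ by a constant does not help (the constant cancels from both sides of the required inequality), and your exponential variant $F=\psi^2e^{\beta|\nabla^{m-1}A|^2}|\nabla^m A|^2$ fails for the same reason: the gradient cross term contributes $+\beta^2\|\nabla|\nabla^{m-1}A|^2\|_\cL^2\lesssim \beta^2\sup|\nabla^{m-1}A|^2\,|\nabla^m A|^2$, so closing requires $\beta\gamma_{m-1}>C_m+C\beta^2\sup|\nabla^{m-1}A|^2$, which again has no solution $\beta$ when $\sup|\nabla^{m-1}A|^2$ is large. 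In short: every coupling you propose produces a good term whose coefficient is \emph{bounded} in terms of quantities you cannot control, whereas the bad term $C_m|\nabla^m A|^4$ carries a fixed coefficient of the same order.

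The paper's proof overcomes precisely this by taking the \emph{power} coupling $\varphi^{f}$ with $\varphi=\psi^2|\nabla^m A|^2$ and $f=|\nabla^{m-1}A|^2+\theta_m+1$ (it explicitly contrasts this with the usual product $f\varphi$ of Shi's argument). Differentiating the exponent produces the term $(\log\varphi)(\bd_t-\cL)f\leq -\gamma_{m-1}(\log\varphi)|\nabla^m A|^2+\dots$, so the good term's coefficient is $\gamma_{m-1}\log\varphi$, which is \emph{unbounded} as $\varphi\to\infty$; meanwhile the quartic bad term, after division by $\varphi$ in the expansion of $(\bd_t-\cL)\varphi^f$, becomes $C|\nabla^m A|^2$ with a bounded coefficient. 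Hence at an interior maximum one gets $0\leq\gamma_{m-1}|\nabla^m A|^2(C_{m,6}-\log\varphi)$ and therefore $\varphi\leq e^{C_{m,6}}$, with no constraint relating $\sup_{Q_M}|\nabla^{m-1}A|^2$ to $\gamma_{m-1}/C_m$. To repair your argument you would need to replace the quotient/exponential coupling by this (or an equivalent) mechanism that makes the coefficient of the compensating negative term grow with $\varphi$.
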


\begin{proof}
Given a time $T_0 \in [0,T)$,  we will show that 
\begin{equation*}
 \psi^2  | \nabla^m A |^2 (p,T_0) \leq C\big(k, m, n, M,\;   \inf_{Q_M} Q_k, \; \sup_{Q_M} |A|^2,  \; \max_{ j < m}\big(\sup_{Q_M} |\nabla^j A|^2),\; \sup_{p \in M^n} \psi^2|\nabla^m A|^2(p,0)\big)
\end{equation*}
holds on $Q_M \coloneqq \{ (p,s) \in M^n \times [0,T_0]:\bar u(p,s) \leq M\}.$

To this end, we define the functions  $\varphi$ and $f$ by 
\begin{align*}
\varphi  = \psi^2 \, |\nabla^m A|^2, \qquad  f  =   |\nabla^{m-1} A|^2+\theta_m+1
\end{align*}
where $\theta_m$ is a positive constant to be determined. Our estimate will follow by considering  the evolution of 
the function $\varphi^f$. In the usual setting of Shi's derivative estimates one considers the evolution of the function $f \varphi$ (c.f. in \cite{HE89MCFasymptotic, HE91MCFexist} for the result in the case of the Mean curvature flow). 

If the set $\{\varphi \geq 1 \} \coloneq  \{(p,t)\in M^n \times [0,T_0]:\varphi(p,t)\geq 1\}$ is empty, then $\psi^2 \,  | \nabla^m A |^2 (p,T_0) \leq 1$
and the desired estimate follows. So, we may assume that $\{\varphi \geq 1 \}$ is non-empty  and we will next consider the  evolution equation of
the function  $\varphi^f$ on $\{\varphi \geq 1 \}$. We have,
\begin{align*}
\bd_t \varphi^{f } = f  \varphi^{f-1} \bd_t \varphi  + (\log \varphi ) \varphi^{f } \bd_t f 
\end{align*}
and also
\begin{align*}
\cL\, \varphi^{f } = &\frac{\bd Q_k}{\bd h_{ij}}\nabla_i\big( f  \varphi^{f -1} \nabla_j \varphi  + (\log \varphi ) \varphi^{f } \nabla_j f \big) \\
= & f  \varphi^{f -1} \cL\, \varphi  + (\log \varphi ) \varphi^{f } \cL\, f  \\
&+f (f -1)\varphi^{f -2}\|\nabla \varphi  \|^2_{\cL} +|\log \varphi |^2 \varphi^{f } \|\nabla f  \|^2_{\cL} + 2\varphi^{f -1}(1+f \log \varphi )\langle \nabla \varphi ,\nabla f \rangle_{\cL}\\ 
\geq & f  \varphi^{f -1} \cL\, \varphi  + (\log \varphi ) \varphi^{f } \cL\, f  -\varphi^{f -2}\|\nabla \varphi  \|^2_{\cL} + 2\varphi^{f -1}\langle \nabla \varphi  ,\nabla f \rangle_{\cL}\\
\geq & f  \varphi^{f -1} \cL\, \varphi  + (\log \varphi ) \varphi^{f } \cL\, f  -2\varphi^{f -2}\|\nabla \varphi  \|^2_{\cL} - \varphi^{f }\|\nabla f \|_{\cL}^2.
\end{align*}
Combining the inequalities above yields 
\begin{align*}\label{eq:DE Evolution}
(\bd_t -\cL \,) \varphi^{f }  \leq  \varphi^{f}\big(f \varphi^{-1}(\bd_t - \cL\,) \varphi  + 2 \varphi^{-2} \|\nabla \varphi  \|_{\cL}^2+ (\log \varphi )(\bd_t - \cL\,) f  +  \|\nabla f  \|^2_{\cL}\big). \tag{5.2}
\end{align*}

Recall next that our cut off function $\psi$ satisfies  $\bd_t \psi^2  \leq \cL \,\psi^2$ by \eqref{eq:Pre psi_t} and combine it  with \eqref{eq:DE Basic estimate} to show
that $\varphi  \coloneq  \psi^2 \, |\nabla^m A|^2$ satisfies 
\begin{align*}
(\bd_t -\cL\,)\varphi  \leq & -2\langle \nabla \psi^2,\nabla |\nabla^m A|^2\rangle_\cL  -\gamma_m  \psi^2 |\nabla^{m+1} A |^2 +C_{m}\big(|\nabla^m A|^4\psi^2  +\psi^2\big) \\
\leq &\, C \, \psi |\nabla \psi| |\nabla^{m+1}A||\nabla^m A|-\gamma_m \,  \psi^2  |\nabla^{m+1} A |^2 +C_{m} \, |\nabla^m A|^{2} (\varphi + \psi^4\varphi^{-1}). 
\end{align*}
Using that  $\varphi \geq 1$,  $\psi \leq M$, and $|\nabla \psi|^2\leq n$ we obtain
\begin{align*}\label{eq:DE Varphi Evolution}
(\bd_t -\cL\,)\varphi  \leq  -\frac{1}{2}\gamma_m  \psi^2 |\nabla^{m+1} A |^2 +C_{m,1}|\nabla^m A|^{2} \varphi\tag{5.3}
\end{align*}
for some constant $C_{m,1}$. In addition, $\nabla \varphi  = |\nabla^m A|^2 \nabla \psi^2 + \psi^2\nabla |\nabla^m A|^2$ gives
\begin{align*}\label{eq:DE Varphi Gradient}
\|\nabla \varphi  \|^2_{\cL} \leq &\, 2\big(|\nabla^m A|^4\|\nabla \psi^2\|_{\cL}^2+\psi^4\|\nabla |\nabla^m A|^2\|_{\cL}^2\big)\leq C_{m,2} \,  \varphi  \big(|\nabla^m A|^2 + \psi^2|\nabla^{m+1} A|^2\big)\tag{5.4}
\end{align*}
for some constant $ C_{m,2}$. We set 
$$\theta_m \coloneq  4C_{m,2}/\gamma_m \quad \mbox{and} \quad  C_{m,3}= \sup_{Q_M}|\nabla^{m-1}A|^2+\theta_m+1.$$
It follows from 
\eqref{eq:DE Varphi Evolution},  \eqref{eq:DE Varphi Gradient}, $2C_{m,2}\eqqcolon \frac{1}{2}\gamma_m \theta_m  \leq \frac{1}{2}\gamma_m f$, $f \leq C_{m,3}$, and $\varphi^{-1}\leq 1$ 
that 
\begin{align*}\label{eq:DE Varphi}
f\varphi^{-1} (\bd_t - \cL) \varphi  + 2 \varphi^{-2} \|\nabla \varphi  \|_{\cL}^2   \leq   (C_{m,1}C_{m,3}+2C_{m,2}) |\nabla^m A|^2. \tag{5.5}
\end{align*}

On the other hand, \eqref{eq:DE Basic estimate} leads to
\begin{align*}\label{eq:DE f Evolution}
(\bd_t - \cL\,) f &\leq  -\gamma_{m-1} |\nabla^m A |^2 +C \\
&=  \, |\nabla^m A |^2(-\gamma_{m-1} +C\psi^2\varphi^{-1})\leq   |\nabla^m A |^2(-\gamma_{m-1} +C_{m,4}\varphi^{-1})\tag{5.6}
\end{align*}
for some constant $C_{m,4}$. Also, we have $\|\nabla f \|_{\cL}^2 \leq C|\nabla^{m-1} A|^2|\nabla^{m} A|^2$. Therefore,  for some constant $C_{m,5}$
\begin{align*}\label{eq:DE f Gradient}
\|\nabla f \|_{\cL}^2 \leq C_{m,5}|\nabla^{m} A|^2\tag{5.7}
\end{align*}
Thus, on $\{\varphi \geq 1 \}$, combining (\ref{eq:DE Varphi}), \eqref{eq:DE f Evolution}, and \eqref{eq:DE f Gradient} yields
\begin{align*}
(\bd_t -\cL \,) \varphi^{f }  \leq  \varphi^{f}|\nabla^{m} A|^2\big(C_{m,1}C_{m,3}+2C_{m,2}-\gamma_{m-1}\log \varphi +C_{m,4}\varphi^{-1}\log \varphi +C_{m,5} \big).
\end{align*}
By using $\varphi^{-1}\log\varphi \leq 1$, we can reduce the inequality above to
\begin{align*}\label{eq:DE Final}
(\bd_t -\cL \,) \varphi^{f }  \leq  \gamma_{m-1}\varphi^{f}|\nabla^{m} A|^2(C_{m,6}-\log \varphi)\tag{5.8}
\end{align*}
where $C_{m,6} \eqqcolon (C_{m,1}C_{m,3}+2C_{m,2}+C_{m,4}+C_{m,5})/\gamma_{m-1}$.

We are now ready to finish our argument. Since $\psi$ is compactly supported, $\varphi^f$ attains its maximum at some point $(p_0,t_0) \in M^n \times [0,T_0]$.
Assume $t_0 \in (0,T_0]$ and $\varphi^f (p_0,t_0) \geq 1$. Then, $\varphi(p_0,t_0) \geq 1$. Thus, at $(p_0,t_0)$, the inequality \eqref{eq:DE Final} holds. Hence,
\begin{align*}
0  \leq C_{m,6}- \log \varphi (p_0,t_0). 
\end{align*}
Since $f \geq 1$, if $\varphi(p,t)\geq 1 $ then $\varphi^f (p,t) \geq \varphi(p,t)$. Thus, 
\begin{equation*}
\varphi(p,T_0) \leq 1+\varphi^f (p,T_0)\leq 1+ \varphi^f (p_0,t_0) \leq 1+(\varphi (p_0,t_0))^{C_{m,3}} \leq 1+\exp(C_{m,6}C_{m,3})
\end{equation*}
which completes our proof. 

\end{proof}

\section{Long time existence}

In this final section, we will establish the  long time 
existence of the $Q_k$-flow (\ref{eq:INT Qkflow}), as stated in our main Theorem \ref{thm:INT Existence}.
Our proof will be based on the  a'priori estimates in sections  \ref{sec-prelim}-\ref{sec-higher}. Before we present the proof of Theorem \ref{thm:INT Existence}, we will 
introduce  some extra notation  and preliminary results.

\begin{notation}\label{not2}  We have:  

\begin{enumerate}
\item  Given a set $A \in \rno$, we denote by $\cv(V)$ its \emph{convex hull} $\{ tx+(1-t)y:  x,y \in A, t \in [0,1]\}$.
\item  Let $\Sigma$ be a convex complete (or closed) hypersurface.  If a set $V$ is a subset of  $\cv(\Sigma)$, we say $V$ is \emph{enclosed} by $\Sigma$ and use the notation  
\begin{equation*}
V \preceq \Sigma. 
\end{equation*}
In particular, if $V \bigcap \Sigma =\emptyset$ and $V \preceq \Sigma $, we use $V \prec \Sigma$.

\item $B^{n+1}_R(Y)\coloneqq \{X \in \rno: |X-Y|<R \}$ 
denotes the \emph{$(n+1)$-ball of  radius $R$ centered at $Y \in\rno$}.

\item For a convex hypsersurface $\Sigma$ and $\eta>0$, we denote by $\Sigma^\eta$ the $\eta$-envelope of $\Sigma$.
\begin{align*}
\Sigma^{\eta} = \big \{Y \in \rno :d(Y,\Sigma)=\eta,\, Y \not \in \cv(\Sigma) \big\}
\end{align*}
where $d$ is the distance function.

\item For a convex closed hypersurface $\Sigma$, we define the \textit{support function} $S:S^n\to \mathbb{R}$ by
\begin{align*}
S(v)=\max_{Y\in \Sigma}\,\langle v,Y\rangle.
\end{align*}

\item For a convex $C^2$ hypersurface $\Sigma$ and a point $X \in \Sigma$, we denote by $\lambda_{\min}(\Sigma)(X)$ the smallest principal curvature. 
Also, for any convex hypersurface $\Sigma$ and a point $X \in \Sigma$, we define 
\begin{align*}
\qquad \lambda_{\min}(\Sigma)(X) = \sup \big\{ \lambda_{\min}(\Phi)(X): \Phi \,  \text{complete (or closed) $C^2$ hypersurface}, \Sigma \preceq \Phi , X \in \Phi \big\}
\end{align*}
and also 
\begin{align*}
\lambda_{\min}^{\text{loc}}(\Sigma)(X)=\liminf_{ r \to 0} \Big \{\lambda_{\min} (\Sigma)(Y):  Y \in \Sigma \bigcap B^{n+1}_r(X)\Big\}. 
\end{align*}
\end{enumerate}
\end{notation}

\smallskip

Since we will approximate the initial hypersurface $\Sigma_0$ by its envelopes $(\Sigma_0)^\eta$, which are of class $C^{1,1}$, in order to regularized them, we introduce the convolution on the sphere.

\begin{proposition}[Convolution on $S^n$]\label{prop:LTE Convolution}
For $\epsilon \in (0,1)$, let $\varphi_\epsilon:S^n\times S^n \to \mathbb{R}$ be a smooth function satisfying
\begin{enumerate}
\item[{\em (i)}]  $\varphi_\epsilon(v,w)=\eta_\epsilon(\langle v,w\rangle)$ for a non-negative function $\eta_\epsilon:[-1,1]\to [0,+\infty)$, 
\item [{\em (ii)}] $\eta_\epsilon(r)=0$ for all $r\in [-1, 1-\epsilon]$
\item [{\em (iii)}] $\int_{S^n} \varphi_{\epsilon}(v) d s =1 $, where $ds$ denotes  the surface measure on  $S^n$  
\end{enumerate}
and  define the convolution $f*\varphi_\epsilon$ with a function $f:S^n \to \mathbb{R}$ by
\begin{align*}
f*\varphi_\epsilon (v)=\int_{S^n} f(w)\varphi_\epsilon(v,w)d s_w.  
\end{align*}
Assume that $f$ is of class $C^m(U)$ on an open subset $U \subset S^n$, then $f*\varphi_\epsilon$ uniformly converge  
to $f$ in $C^{m}(K)$ on any compact subset $K$ of $U$. 
\end{proposition}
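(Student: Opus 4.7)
The plan is to reduce the claim to the standard mollifier convergence theorem on Euclidean space by localising and exploiting the rotational invariance of the kernel. Since $\eta_\epsilon(r)=0$ for $r\le 1-\epsilon$, the kernel $\varphi_\epsilon(v,\cdot)$ is supported in the closed geodesic ball of radius $\delta_\epsilon := \arccos(1-\epsilon)$ around $v$, and $\delta_\epsilon\to 0$ as $\epsilon\to 0^+$. I would fix a compact neighbourhood $K'$ of $K$ with $K\subset \mathrm{int}(K')\subset K'\subset U$, and restrict attention to $\epsilon$ so small that $B_{\delta_\epsilon}(v)\subset K'$ for every $v\in K$.

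For the $C^0$ estimate, condition (iii) allows one to write
\[
(f*\varphi_\epsilon)(v)-f(v)=\int_{S^n}\bigl(f(w)-f(v)\bigr)\,\varphi_\epsilon(v,w)\,ds_w,
\]
and uniform continuity of $f$ on $K'$, combined with the shrinking of $\mathrm{supp}\,\varphi_\epsilon(v,\cdot)$ to $\{v\}$, immediately yields $\|f*\varphi_\epsilon-f\|_{C^0(K)}\to 0$ as $\epsilon\to 0^+$.

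For the $C^m$ estimate the essential idea is to remove the $v$-dependence from the kernel. Fix $v_0\in K$ and choose, on a neighbourhood $V$ of $v_0$, a smooth map $v\mapsto R_v\in SO(n+1)$ with $R_{v_0}=\mathrm{Id}$ and $R_v v_0=v$ (for instance the rotation in the $2$-plane $\mathrm{span}(v_0,v)$, defined away from the antipode). Using $\varphi_\epsilon(v,w)=\varphi_\epsilon(v_0,R_v^{-1}w)$ and substituting $w=R_v w'$ one obtains
\[
(f*\varphi_\epsilon)(v)=\int_{S^n}f(R_v w')\,\varphi_\epsilon(v_0,w')\,ds_{w'}.
\]
The kernel is now independent of $v$, so derivatives may be pulled inside and the chain rule expresses $\partial_v^\alpha[f(R_v w')]$ for $|\alpha|\le m$ as $\sum_{|\beta|\le|\alpha|}(\nabla^\beta f)(R_v w')\,P_{\alpha\beta}(v,w')$ with each $P_{\alpha\beta}$ smooth in $(v,w')$.

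Applying the $C^0$ argument to each of these finitely many integrals reduces the $C^m$ claim to a single consistency check: at $w'=v_0$ the map $v\mapsto R_v w'$ reduces to $v\mapsto v$, so pointwise evaluation of the integrand at $w'=v_0$ reproduces $(\nabla^\alpha f)(v)$ exactly. Uniform continuity of $\nabla^\beta f$ on $K'$ and the shrinking support of $\varphi_\epsilon(v_0,\cdot)$ then deliver uniform $C^m$ convergence on $V\cap K$, and a finite cover of $K$ by such $V$'s finishes the proof. The only place where care is needed is the bookkeeping of this chain rule identity at the centre, namely that the derivatives of $v\mapsto R_v v_0=v$ give the expected identity factor; this is where I expect the only genuine work to lie, and it is essentially built into the construction of $R_v$.
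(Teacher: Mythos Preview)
Your argument is correct and, like the paper's, rests on the rotational invariance of the kernel $\varphi_\epsilon(v,w)=\eta_\epsilon(\langle v,w\rangle)$; however, you implement this invariance differently. You absorb the $v$-dependence of the kernel into the integrand by the substitution $w=R_v w'$, obtaining a fixed kernel $\varphi_\epsilon(v_0,\cdot)$ against which you may differentiate freely in $v$; the consistency check $R_v v_0=v$ then recovers $\nabla^\alpha f(v)$ in the limit. The paper instead works in a single projective-type chart $\xi(x)=Q\bigl((x,-1)(1+|x|^2)^{-1/2}\bigr)$ and introduces the first-order operator
\[
\widetilde{\nabla}_i h(y)=\partial_i h(y)+y_i\sum_j y_j\,\partial_j h(y),
\]
which is tailored so that one has the exact commutation identity
\[
\widetilde{\nabla}\bigl((f*\varphi_\epsilon)\circ\xi\bigr)(x)=\int_{\mathbb{R}^n}\widetilde{\nabla}(f\circ\xi)(y)\,\varphi_\epsilon(\xi(x),\xi(y))\,(1+|y|^2)^{-\frac{n+1}{2}}\,dy,
\]
i.e.\ $\widetilde{\nabla}$ passes through the convolution without any change of variables. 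Your route is more geometric and avoids introducing a bespoke operator, at the cost of the chain-rule bookkeeping you flag; the paper's route is computationally sharper (a single identity does all the work) and has the advantage, exploited in the subsequent Proposition, that the same operator $\widetilde{\nabla}$ reappears when analysing second derivatives of the support function.
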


\begin{proof}  
The  proof is standard but  we include it here for completeness.  Since $\epsilon \in (0,1) $, for each $v \in S^n$, the support of $\varphi_\epsilon(v,\cdot)$ is compactly embedded in the hemisphere centered at $v$. We choose a rotation matrix $Q \in O(n)$ satisfying $Q(-\vec{e}_{n+1})=v$. We define a chart $ \xi :\rn \to S^n$  and a differential operator $\tN :C^1(\rn) \to \big(C^0(\rn)\big)^n$ by
\begin{align*}
\xi(x)=Q\big((x,-1)(1+|x|^2)^{-\frac{1}{2}}\big) \qquad \mbox{and} \qquad   \tN_i h(y) = \bd_i h (y) + y_i \sum_{j=1}^n y_j\bd_j h(y) 
\end{align*}
where $h\in C^1(\rn)$. Then, by direct computation, we have
\begin{align*}\label{eq:LTE Commutation in convolution}
\tN \big((f*\varphi_\epsilon)\circ\xi \big)(x)=\int_{\rn} \tN (f\circ \xi )(y) \, \varphi_\epsilon (\xi(x),\xi(y)) \, (1+|y|^2)^{-\frac{n+1}{2}}  dy\tag{6.1}
\end{align*}
which gives the desired result.
\end{proof}

Let  $\varphi_\epsilon$ be as in  Proposition \ref{prop:LTE Convolution} and let $\Sigma$ be  a strictly convex closed hypersurface. 
We will next show how to  regularize $\Sigma$ by convolving the support function of  its  $\eta$-envelope $\Sigma_\eta$ (which is a $C^{1,1}$ hypersurface) 
with the function $\varphi_\epsilon$. This is a standard argument which we include here for the reader's convenience. 

\begin{proposition}\label{prop:LTE Support function}
Let  $\Sigma^\eta$ denote  the  $\eta$-envelope of a convex closed hypersurface $\Sigma$ with a uniform lower bound for $\lambda_{\min}(\Sigma)(X)$. and let $S$ denote the support function of $\Sigma^\eta$. Assume $\Sigma$ encloses the origin. Then, there is a small constant $\alpha(\eta,\Sigma)>0$ such that for each $\epsilon \in (0,\alpha)$, $S*\varphi_\epsilon$ is the support function of a strictly convex smooth closed hypersurface $\Sigma^\eta_\epsilon$. In addition,  $S*\varphi_\epsilon \to S$, as $\epsilon \to 0$,   uniformly  on  $C^{1}(S^n)$, and the following holds
\begin{align*}
\liminf_{\epsilon \to 0} \lambda_{\min}(\Sigma^{\eta}_\epsilon)(X_\epsilon) \geq \lambda_{\min}^{\text{\em loc}}(\Sigma^{\eta})(X)
\end{align*}
where $\{X_\epsilon\}$ is a set of points $X_\epsilon \in \Sigma^\eta_\epsilon$ converging to $X \in \Sigma^\eta$ as $\epsilon \to 0$.
\end{proposition}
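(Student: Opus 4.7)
The plan is to view $S*\varphi_\epsilon$ as a Minkowski average over small rotations of the convex body $K=\cv(\Sigma^\eta)$, from which the support-function property, smoothness, and curvature bounds all follow. Since $\Sigma^\eta$ is $C^{1,1}$ and the hypothesis $\lambda_{\min}(\Sigma)\geq c>0$ forces the principal radii of $\Sigma^\eta$ to lie in $[\eta,\,c^{-1}+\eta]$, the radius tensor $R[S]:=\nabla^2_{S^n}S+Sg$ satisfies the two-sided bound $\eta g\leq R[S]\leq(c^{-1}+\eta)g$ almost everywhere on $S^n$. The key geometric observation is that since $\varphi_\epsilon(v,w)=\eta_\epsilon(\langle v,w\rangle)$ is zonal and non-negative, there is a probability measure $\mu_\epsilon$ on $SO(n+1)$, supported in a $\sqrt{2\epsilon}$-neighborhood of the identity, such that for every $v\in S^n$ and every continuous $f$,
\[
\int_{S^n} f(w)\,\eta_\epsilon(\langle v,w\rangle)\,ds_w \;=\; \int_{SO(n+1)} f(Ov)\,d\mu_\epsilon(O).
\]
Applied to $f=S$, this identifies $S*\varphi_\epsilon$ with a Minkowski average of the support functions $v\mapsto S(Ov)$ of the rotated bodies $O^{-1}K$, and any such non-negative average is itself the support function of a convex body $K_\epsilon=\cv(\Sigma^\eta_\epsilon)$.

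To obtain smoothness and strict convexity of $\Sigma^\eta_\epsilon$, note that $S*\varphi_\epsilon$ is smooth because $\varphi_\epsilon$ is, and differentiating under the integral yields
\[
R[S*\varphi_\epsilon](v)\;=\;\int_{SO(n+1)} R[S\circ O](v)\,d\mu_\epsilon(O).
\]
Because rotations preserve spectra, each integrand has eigenvalues (as a symmetric tensor on $T_vS^n$) equal to those of $R[S]$ at $Ov$, hence lying in $[\eta,\,c^{-1}+\eta]$. The matrix inequalities $\lambda_{\min}(A+B)\ge\lambda_{\min}(A)+\lambda_{\min}(B)$ and $\lambda_{\max}(A+B)\le\lambda_{\max}(A)+\lambda_{\max}(B)$ extend to non-negative matrix-valued integrals, giving $\eta g\le R[S*\varphi_\epsilon]\le(c^{-1}+\eta)g$; consequently $\Sigma^\eta_\epsilon$ is smooth and strictly convex with principal curvatures in $[c/(1+c\eta),\,1/\eta]$. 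The $C^1(S^n)$ convergence $S*\varphi_\epsilon\to S$ follows by applying the commutation identity (6.1) of Proposition \ref{prop:LTE Convolution} to $S\in C^{1,1}(S^n)$, together with the usual uniform convergence of mollifications of Lipschitz functions on the compact manifold $S^n$.

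Finally I address the liminf inequality. Set $\lambda_0=\lambda_{\min}^{\text{loc}}(\Sigma^\eta)(X)$ and fix $\delta>0$. By definition of $\lambda_{\min}^{\text{loc}}$, there is an open neighborhood $U$ of $X$ in $\Sigma^\eta$ on which $\lambda_{\min}(\Sigma^\eta)\ge\lambda_0-\delta$; translating via the Gauss map, $R[S]$ has eigenvalues $\le(\lambda_0-\delta)^{-1}$ on the open set $V=G(U)\subset S^n$, which contains $v=G(X)$. The $C^1$ convergence of the previous step implies that the outer normal $v_\epsilon$ at $X_\epsilon\in\Sigma^\eta_\epsilon$ tends to $v$, so for $\epsilon$ small enough every rotation in the support of $\mu_\epsilon$ carries $v_\epsilon$ into $V$. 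Applying the $\lambda_{\max}$ inequality from the preceding paragraph to the restricted integral yields $\lambda_{\max}(R[S*\varphi_\epsilon](v_\epsilon))\le(\lambda_0-\delta)^{-1}$, equivalently $\lambda_{\min}(\Sigma^\eta_\epsilon)(X_\epsilon)\ge\lambda_0-\delta$. Taking $\liminf_{\epsilon\to 0}$ and then $\delta\to 0$ completes the proof. The main technical obstacle is the rigorous construction of the rotation measure $\mu_\epsilon$ with the stated pushforward property; this can be done using the Haar decomposition of $SO(n+1)$ relative to the stabilizer $SO(n)$ of a fixed base point, or alternatively bypassed by a direct computation of $R[S*\varphi_\epsilon]$ via (6.1), though the Minkowski-average viewpoint gives the cleanest conceptual picture.
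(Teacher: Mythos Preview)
Your conceptual picture is attractive, but the step you yourself flag as the ``main technical obstacle'' is a genuine gap, not a routine detail. The identity
\[
\int_{S^n} f(w)\,\eta_\epsilon(\langle v,w\rangle)\,ds_w \;=\; \int_{SO(n+1)} f(Ov)\,d\mu_\epsilon(O)
\qquad\text{for all }v\in S^n
\]
with a \emph{single} probability measure $\mu_\epsilon$ independent of $v$ forces the pushforward of $\mu_\epsilon$ under $O\mapsto Ov$ to equal $\eta_\epsilon(\langle v,\cdot\rangle)\,ds$ for every $v$. Working this out, one finds that the natural candidate has density $c_n\,\eta_\epsilon(\langle v,Ov\rangle)$ with respect to Haar measure, which depends on $v$; to remove the $v$--dependence one is led to require $\mu_\epsilon$ to be conjugation--invariant. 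But for $n\geq 2$ the map from conjugation--invariant measures on $SO(n+1)$ to zonal measures on $S^n$ is an Abel--type integral transform, and there is no reason the inverse image of a given smooth non--negative $\eta_\epsilon$ should be a \emph{non--negative} measure. Without positivity of $\mu_\epsilon$, the eigenvalue inequality $\lambda_{\min}(\int A\,d\mu)\ge\int\lambda_{\min}(A)\,d\mu$ fails, and with it your curvature bounds. The Haar decomposition you invoke produces the bi--$SO(n)$--invariant measure $c_n\eta_\epsilon(\langle e,Oe\rangle)\,dO$, which is \emph{not} conjugation--invariant, so it does not close the gap.

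Your fallback --- ``bypassed by a direct computation of $R[S*\varphi_\epsilon]$ via (6.1)'' --- is precisely what the paper does. The paper writes $\bar\nabla_i\bar\nabla_j f+f\bar g_{ij}$ in the chart $\xi$ as a linear expression $L_x(\tN\tN f,\tN f,f)$, uses the commutation identity (6.1) to pass $\tN$ through the convolution, and then estimates
\[
R[S*\varphi_\epsilon](x)=\int R[S](y)\,\varphi_\epsilon(x,y)\,\frac{dy}{(1+|y|^2)^{(n+1)/2}}
+\int\big(L_x(S(y))-L_y(S(y))\big)\,\varphi_\epsilon(x,y)\,\frac{dy}{(1+|y|^2)^{(n+1)/2}},
\]
where the first integral inherits the two--sided bound $\eta\,\bar g\le R[S]\le (c^{-1}+\eta)\,\bar g$ pointwise (this replaces your Minkowski--average eigenvalue argument), and the second is an $o(1)$ error since $L_x\to L_y$ as $x\to y$ and $\bd^2 S$ is bounded. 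This simultaneously yields the existence of $\alpha$, the strict convexity of $\Sigma^\eta_\epsilon$, and (localizing the upper bound) the liminf inequality. So once you abandon the rotation--average interpretation, your proof collapses to the paper's; the Minkowski picture, as written, does not stand on its own.
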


\begin{proof}
Let $\bar g_{ij}$ denote the standard metric on $S^n$ and let  $\bar \nabla$ be the connection on $S^n$ defined by $\bar g_{ij}$. 
  Notice that for a function $f:S^n\to \mathbb{R}^+$, if $\bar \nabla_i \bar \nabla_j f+f\bar g_{ij}$ is a positive definite matrix with respect to the metric $\bar g_{ij}$, then $f$ is the support function of a strictly convex hypersurface, and the eigenvalues of $\bar \nabla_i \bar \nabla_j f+f\bar g_{ij}$ are the principal radii of curvature of the hypersurface (c.f. \cite{U91InverseFlow}). 

Since $\Sigma^\eta$ is a uniformly convex hypersurface of class $C^{1,1}$, its  support function $S$ is of class $C^{1,1}(S^n)$, namely $\bar \nabla \bar \nabla$ exists almost everywhere. In addition, since the principal radii of curvature of the $\eta$-envelope are bounded from below by $\eta$, we have 
\begin{align*}\label{eq:LTE C^2 bound}
\eta\,  \bar g_{ij} \leq  \bar \nabla_i \bar \nabla_j S	+S \bar g_{ij} \leq \sup_{X\in \Sigma^\eta}\lambda_{\min}(\Sigma^\eta)^{-1}(X) \bar g_{ij}=\big(\eta+\sup_{X\in \Sigma}\lambda_{\min}(\Sigma)^{-1}(X)\big) \bar g_{ij}\tag{6.2}
\end{align*}
at points where $\bar \nabla \bar \nabla S$ exists. 

Recall the chart $\xi$ and the differential operator $\tN$ in Proposition \ref{prop:LTE Convolution}. Then, direct computations yield 
 \begin{align*}\label{eq:LTE Computations}
\bd_i = \tN_i - \frac{x_ix_j\tN_j}{1+|x|^2}, \tag{6.3}\qquad 
 \bd_i\bd_j= &\tN_i \tN_j -\frac{x_i^2}{1+|x|^2}\tN_i \tN_j-\frac{x_j^2}{1+|x|^2}\tN_i \tN_j+\frac{x_ix_jx_kx_l}{(1+|x|^2)^2}\tN_k \tN_l\\
& -\frac{x_ix_jx_k}{(1+|x|^2)^2}\tN_k-\frac{(\delta_{ij}-x_i)x_k}{1+|x|^2}\tN_k -\frac{x_j}{1+|x|^2} \tN_i.  
\end{align*}
Since $(\bar \nabla_i \bar \nabla_j f)\circ \xi =\bd_i \bd_j( f\circ \xi) -\Gamma^k_{ij}\bd_k (f\circ \xi) $ holds for  $fC^2(S^n)$, 
we have a  linear map $L_x$ satisfying
\begin{align*}
(\bar \nabla_i \bar \nabla_j f)\circ \xi(x)+\bar g_{ij} f\circ \xi(x)=L_x(\tN\tN (f\circ \xi)(x),\tN (f\circ \xi)(x),f\circ \xi(x)). 
\end{align*}
For convenience, we denote $f\circ \xi$ and  $L_x(\tN\tN f,\tN f, f)$ by $f$ and $L_x(f)$, respectively. Then, since $S$ is a.e. second order differentiable, \eqref{eq:LTE Commutation in convolution} gives
\begin{align*}\label{eq:LTE convolution for linear operator}
&(\bar \nabla_i \bar \nabla_j S*\varphi_\epsilon+\bar g_{ij} S*\varphi_\epsilon)(x)=L_x(S*\varphi_\epsilon(x))=\int_{\rn} L_x (S(y))\varphi_\epsilon (x,y)\frac{dy}{(1+|y|^2)^{\frac{n+1}{2}}}\\
&= \int_{\rn} (\bar \nabla_i \bar \nabla_j S+\bar g_{ij} S)(y)\frac{\varphi_\epsilon (x,y)dy}{(1+|y|^2)^{\frac{n+1}{2}}} +\int_{\rn} \big(L_x(S(y))-L_y (S(y))\big)\frac{\varphi_\epsilon (x,y)dy}{(1+|y|^2)^{\frac{n+1}{2}}}\tag{6.4}\\
&\, \geq \eta\, \bar g_{ij}-|L_x(S(y))-L_y (S(y))|\bar g_{ij}. 
\end{align*}
Notice that $S*\varphi_\epsilon \to S$ uniformly on $C^1(S^n)$ by Proposition   \ref{prop:LTE Convolution}, and  $|\bd^2 S|$ is bounded by \eqref{eq:LTE C^2 bound} and \eqref{eq:LTE Computations}. Also, 
we have $L_x \to  L_y$ as $x \to y$ by \eqref{eq:LTE Computations}. Hence, $|L_x(S(y))-L_y (S(y))|$ converges to zero. Hence,
\begin{align*}
\liminf_{\epsilon \to 0} (\bar \nabla_i \bar \nabla_j S*\varphi_\epsilon+\bar g_{ij} S*\varphi_\epsilon)(x) \geq \eta\, \bar g_{ij}. 
\end{align*}
So, there exist some $\alpha$ such that for each  $\epsilon \in (0,\alpha)$, there is a strictly convex hypersurface $\Sigma^\epsilon_\epsilon$ whose the support function is $S*\varphi_\epsilon$. Similarly, we can derive from \eqref{eq:LTE convolution for linear operator} that 
\begin{align*}
\limsup_{\epsilon \to 0} (\bar \nabla_i \bar \nabla_j S*\varphi_\epsilon+\bar g_{ij} S*\varphi_\epsilon)(x_\epsilon) \leq \big(\lambda_{\min}^{\text{loc}}(\Sigma^{\eta})(X)\big)^{-1}\bar g_{ij}
\end{align*}
where $x_\epsilon$ converge to a point $x$ such that  $\xi(x)$ is the outer normal vector  of $\Sigma^\eta$ at $X$.
\end{proof}

We will now give the proof of our long time  existence result,  Theorem \ref{thm:INT Existence}. 

\begin{proof}[Proof of Theorem {\em  \ref{thm:INT Existence}}] Let $u_0, \Sigma_0$ and $\Omega_0$ be as in the statement 	
of the theorem and assume without loss of generality   that $\displaystyle \inf_{\Omega} u_0 =0$.
We will obtain a solution $\Sigma_t\coloneqq \{ (x,    u(\cdot,t)):  x \in \Omega_t\subset \rn\}$
as a limit  $$  \Sigma_t\coloneqq \lim_{j \to +\infty}   \Sj^j_t$$   where  $\Sj_t^j$ is 
 a strictly convex closed hypersurface which is symmetric with respect to the hyperplane $x_{n+1}=j$
and also evolves by the $Q_k$-flow \eqref{eq:INT Qkflow}. Then, the symmetry guarantees that the lower half of $\Sj^j_t$ is a graph. Thus, by applying the local a'priori estimates shown in sections
\ref{sec-prelim}-\ref{sec-higher} on compact subsets of $\rno$, we have 
the uniform $C^{\infty}$ bounds on the lower half of $\Sj^j_t$ necessary to
pass to the limit. 

Now, given the initial data $u_0$, $\Sigma_0$, and $\Omega_0$, we define $\Sj^j_t$ and $\Sigma_t$
as follows. 

\smallskip

\noindent{\em Step 1 : The construction of the approximating sequence $\Sj_t^j$.} \,   Let  $u_0, \Sigma_0$ and $\Omega_0$ be as in  
Theorem \ref{thm:INT Existence}  and assume that $\displaystyle \inf_{\Omega} u_0 =u_0(0)= 0$.  
Since $\Sigma_0$ is not necessarily strictly convex we consider the   strictly convex rotationally symmetric non-negative entire function $\varphi(x)=\rn \to \mathbb{R}^+_0$ of class $C^{\infty}(\rn)$ defined by
\begin{align*}
\varphi(x)=  \int^{|x|}_0 \arctan r \,  dr 
\end{align*}
and   for each $j \in \mathbb{N}$, we define the approximate strictly convex smooth function $u_j:\Omega_0 \to \mathbb{R}$ with the corresponding graph $\widetilde{\Sigma}^j_0$ by
\begin{align*}
\tilde{u}_j(x)=u_0(x)+\varphi(x)/j \,, \quad \widetilde{\Sigma}^j_0=\{(x,\tilde{u}_j(x)):x\in \Omega_0\}. 
\end{align*}
Then, we  reflect $\widetilde{\Sigma}^j_0 \bigcap \big(\rn \times [0,j]\big)$
 over the $j$-level hyperplane $\rn \times \{j\}\coloneqq\{(x,j):x \in \rn\}$ to  obtain a strictly convex closed hypersurface $\widetilde{\Sj}^j_0$ defined by
\begin{align*}
\widetilde{\Sj}^j_0 = \big\{(x,h) \in \rno :x \in \Omega_0,\; \tilde{u}_j(x)\leq j,\;  h \in \{\tilde{u}_j(x),2j-\tilde{u}_j(x)\}\big\}. 
\end{align*}
Since $\widetilde{\Sj}^j_0$ fails to be smooth at its intersection with the hyperplane $\rn \times \{j\}$, we again approximate $\tilde{\Sj}^j_0$ by a strictly convex closed $C^{1,1}$ hypersurface $\bar \Sj^j_0$ which is the $(1/j)$-envelope of $\widetilde{\Sj}^j_0$, namely $\bar \Sj^j_0 \coloneq (\widetilde{\Sj}^j_0)^{1/j} $.
Then, we denote by $\bar \Sj^j_t$ the unique closed solution of the Mean curvature flow ($*^n_1$)  defined for $t \in (0,\bar T_j)$
satisfying $\displaystyle \lim_{t \rightarrow 0} \bar  \Sj^j_t= \bar \Sj^j_0$, where $\bar T_j$ is its maximal existing time. In the next step, we will show
\begin{enumerate}[label=(\subscript{P}{\arabic*})]
\item $\bar \Sj^j_t$ is symmetric  with respect to the $j$-level hyperplane $\rn \times \{j\}$.
\item $\bar \Sj^j_t$ is a strictly convex smooth closed solution for $t \in (0,\bar T_j)$.
\item $\bar \Sj^j_t$ is a smooth graph solution on  $\rn \times [-1,j-1/j]$ for $t\in [0,\bar T_j)$.
\end{enumerate} 
Observe that by choosing  $j \gg 1$ (depending on $\Sigma_0$ and
$\Omega_0$) we may assume that the maximal existing time $\bar T_j$ satisfies 
 $\bar{T}_j > 1/j$ so that we have a strictly convex smooth closed hypersurface $\Sj^j_0 \coloneqq \bar \Sj^{j}_{1/j}$. We denote by $\Sj^j_t$ the unique strictly convex smooth closed solution of (\ref{eq:INT Qkflow}) defined for $t \in [0,T_j)$, where $T_j$ is its  maximal existing time (c.f. in \cite{A94NonlinearFlow}).   Finally, we set
$$\Sigma_t =  \bd \big \{ \bigcup_{j\in \mathbb{N}}\cv(\Gamma^j_t)\big \}, \quad 
 T=\sup_{j \in \mathbb{N}} T_j.$$

\medskip

\noindent{\em Step 2 : Proof of the properties} ($P_1$) - ($P_3$). By the well-known local regularity result in \cite{White05Local}, there is a smooth solution $\bar \Sj^j_t$ of the MCF defined for $t \in (0,\bar T_j)$ satisfying $\displaystyle \lim_{t \to 0}\bar \Sj^j_t =\bar \Sj^j_0$. Since $\bar \Sj^j_0$ is symmetric  with respect to $\rn \times \{j\}$, the uniqueness guarantees ($P_1$).

Now, we consider $(0,j)$ as the origin and let $S^j_0$ denote the support function of $\bar \Sj^j_0$ with respect to out new origin $(0,j)$. We define $S^{j,\epsilon}_0$ by the convolution $S^{j,\epsilon}_0=S^j_0*\varphi_\epsilon$ with the mollifier $\varphi_\epsilon$ given in Proposition  \ref{prop:LTE Convolution}. By Proposition \ref{prop:LTE Support function}, for each small $\epsilon \ll 1$, there is a strictly convex smooth closed hypersurface $\bar \Sj^{j,\epsilon}_0$ whose support function is $S^{j,\epsilon}_0$. So, we have a strictly convex smooth solution $\bar \Sj^{j,\epsilon}_t$ of the MCF defined for $t \in [0,\bar T_{j,\epsilon})$. In addition, since $\bar \Sj^{j,\epsilon}_0$ have a uniform lower bound of principal curvatures by Proposition \ref{prop:LTE Support function},  $\bar \Sj^{j,\epsilon}_t$ also have the uniform bound. Thus, the limit $\bar \Sj^j_t$ is a strictly convex hypersurface, which implies ($P_2$).

Observe that ($P_1$) and ($P_2$) prove that $\bar \Sj^{j,\epsilon}_t$ is a graph in $\rn \times [-1,j)$. Thus, we can apply the a'priori estimates in previous sections for $\bar \Sj^{j,\epsilon}_t$ with any cut-off function $\psi\coloneq (M-\bar u(p,t))_+$ of $M<j$. Since $\bar \Sj^j_0$ is smooth in $\rn \times [-1,j-1/j]$, Proposition \ref{prop:LTE Convolution} gives the $C^{\infty}$ convergence of $S^{j,\epsilon}_0$ to $S^j_0$ in $U_j$, where $ U_j \subset S^n$ is the set of outer unit normal vectors of $\bar \Sj^j_0 \bigcap \big(\rn \times [-1,j-1/j] \big)$. Hence, we have local uniform estimates for $\bar \Sj^j_t$ in $\rn \times [-1,j-1/j)$ up to $t=0$, which yields ($P_3$).

\bigskip

\noindent {\em Step 3 : Passing $\Sj^j_t$ to the limit $\Sigma_t$.}  First we have $\bar \Sj^j_0 \preceq \bar \Sj^{j+1}_0$ by definition.  Hence, the  comparison principle 
gives that $\Sj^j_0\coloneq \bar \Sj^j_{1/j}\prec \bar \Sj^{j+1}_{1/(j+1)}\coloneq \Sj^{j+1}_0$ and  as a consequence also  $\Sigma^j_t \prec \Sigma^{j+1}_t \prec \Sigma_0$. Therefore, $ \bigcup_{j\in \mathbb{N}}\cv(\Gamma^j_t)$ is a convex body and $\Sigma_t$ is a complete and convex hypersurface in $\rno$.

 Next we notice that by ($P_3$), we can apply the estimates in sections  \ref{sec-prelim}-\ref{sec-higher}  for the solution $\bar \Sj^j_t$
 of the Mean curvature flow ($*^n_1$).   We choose any constant $M_0 >0$ and $\epsilon\in (0,1)$, and for  $j \gg  M_0+1$, we apply the estimates in the following order
\begin{enumerate}
\item Theorem \ref{thm:Pre Gradient} and Corollary \ref{cor:Pre Lower Bound of Q_k} for $\{\bar \Sj^j_t\}_{t \in [0,1/j]}$ with $M=M_0$.
\item Theorem \ref{thm:SE Speed Estimate} for $\{\bar \Sj^j_t\}_{t \in [0,1/j]}$ with $M=M_0-\epsilon$.
\item Theorem \ref{thm:CE Local Pinching Estimate} for $\{\bar \Sj^j_t\}_{t \in [0,1/j]}$ with $M=M_0-2\epsilon$.
\item For each $m \in \mathbb{N}$, Theorem \ref{thm:DE Higher Order Estimates} for $\{\bar \Sj^j_t\}_{t \in [0,1/j]}$ with $M=M_0-(3-1/2^m)\epsilon$.
\end{enumerate}
Thus, we  obtain  uniform bounds on  $\nu$, $Q_k^{-1}$, and $|\nabla^m A|^2$ for all $m \in \mathbb{N}\bigcup \{0\}$ for $\bar \Sj^j_{1/j}\coloneq \Sj^j_0$ with  $j \geq M_0+1$ on  $\rn \times [0,M_0-3\epsilon]$. 

Recall now that  $\Gamma_t^j$ denotes the solution of  \eqref{eq:INT Qkflow} with initial data $\Gamma_0^j$. 
We can  then apply in a similar manner as above the estimates in Theorem \ref{thm:Pre Gradient}, 
Theorem \ref{thm:SE Speed Estimate},  Theorem \ref{thm:CE Local Pinching Estimate}, and Theorem \ref{thm:DE Higher Order Estimates} 
to obtain uniform bounds on $\nu$, $Q_k^{-1}$, and $|\nabla^m A|^2$ which old on $\rn \times [0,M_0-4\epsilon]$
and for $j \gg M_0+1$. Thus, the limit $\Sigma_t$ is also a smooth solution of \eqref{eq:INT Qkflow} with the positive $Q_k$ curvature. In addition, the local upper bound of $\nu$ implies that each $\Sigma_t$ is a graph.

\bigskip

\noindent{\em Step 4 : Lower bound on  the existence time  $T$.} We will end our proof by estimating from below the maximal time of existence $T$.
Recall the strictly convex perturbation     $\widetilde{\Sigma}_j =\{(x,\tilde{u}_j(x)):x\in \Omega_0\}$ 
of our initial graph $\Sigma_0$ as defined in Step 1 of the proof. Since we have $B_R(x_0)\subset \Omega_0$, for each $\epsilon \in (0,R/2)$, there is a constant $d_\epsilon$ such that $B_{R-\epsilon}^{n+1}((x_0,d_\epsilon))\prec \widetilde{\Sigma}_1 \preceq \widetilde{\Sigma}_j$. Then,   $B_{R-\epsilon}^{n+1}((x_0,d_\epsilon)) \prec \bar \Sj^j_0$
holds  for all $j \geq d_\epsilon$. 

On the the hand, $\bd B^{n+1}_{\bar \rho_\epsilon(t)}((x_0,h_\epsilon))$ is a solution of the MCF, where $\bar \rho_\epsilon(t)=((R-\epsilon)^2-2nt)^{\frac{1}{2}}$. Hence, for each $j > \max\{d_\epsilon,R^2/(8n)\}$, the comparison principle implies that  $\bd B^{n+1}_{\sqrt{(R-\epsilon)^2-2n/j}} \prec \bar\Sj^j_{1/j}\coloneq \Sj^j_0$.
Also,  $\bd B^{n+1}_{\rho^j_\epsilon(t)}((x_0,d_\epsilon))$, with $\rho^j_\epsilon(t)=(R-\epsilon)^2-2n/j-2((n-k+1)/k)t$,  is a solution of the $Q_k$-flow, and therefore the comparison principle leads to $B^{n+1}_{\rho^j_\epsilon(t)}\prec \Sj^j_t$. Since $\Sj^j_t$ exists while $\rho^j_\epsilon(t)>0$, we have $$T \geq ((R-\epsilon)^2-2n/j)/(2(n-k+1)/k).$$
By passing $j \to \infty$ and $\epsilon \to 0$, we  obtain the desired bound  $T \geq \frac{k}{2(n-k+1)}R^2$.
In particular, if there is a ball $B_R(x_R) \subset \Omega_0$ for each $R$, then  $T=+\infty$.
\end{proof}

\centerline{\bf Acknowledgements}

\smallskip 

\noindent P. Daskalopoulos  has been partially supported by NSF grant DMS-1266172.

\bibliographystyle{abbrv}

\bibliography{myref}

\end{document}